\DeclareMathOperator{\Z}{\mathbb{Z}}
\DeclareMathOperator{\Q}{\mathbb{Q}}
\DeclareMathOperator{\F}{\mathbb{F}}
\DeclareMathOperator{\Div}{Div}
\DeclareMathOperator{\Br}{Br}
\DeclareMathOperator{\Gal}{Gal}
\DeclareMathOperator{\Norm}{Norm}
\DeclareMathOperator{\SL}{SL}
\DeclareMathOperator{\GL}{GL}
\DeclareMathOperator{\Cor}{Cor}
\DeclareMathOperator{\Res}{Res}
\newcommand{\abs}[1]{\left | #1 \right|}
\def \P {\mathbb{P}}
\def \Q {\mathbb{Q}}
\def \Z {\mathbb{Z}}
\def \a {\alpha}
\def \b {\beta}
\def \l {\lambda}
\def \s {\sigma}
\def \z {\zeta}
\newtheorem{theorem}{Theorem}[section]
\newtheorem{lemma}[theorem]{Lemma}
\theoremstyle{definition}
\newtheorem{defn}[theorem]{Definition}
\newtheorem{proposition}[theorem]{Proposition}
\newtheorem{alg}[theorem]{Algorithm}
\newtheorem{cor}[theorem]{Corollary}
\newtheorem{ex}[theorem]{Example}
\theoremstyle{remark}
\newtheorem{remark}[theorem]{Remark}
\numberwithin{equation}{section}
\begin{document}
\title{Symbol Length in Brauer Groups of Elliptic Curves}

\author[M. Attanasio]{Mateo Attanasio}
\author[C. Choi]{Caroline Choi}
\author[A. Mandelshtam]{Andrei Mandelshtam}
\address[Attanasio, Choi, Mandelshtam]{Department of Mathematics, Stanford University, Stanford, CA 94305}
\email{mateoatt@stanford.edu}
\email{cchoi1@stanford.edu}
\email{andman@stanford.edu}

\author[C. Ure]{Charlotte Ure}
\address[Ure]{Department of Mathematics, University of Virginia, Charlottesville, VA 22904}
\email{cu9da@virginia.edu} 
\thanks{This paper was created as part of the 2021 Number Theory REU at the University of Virginia. We would like to thank everybody involved with organizing, lecturing, and mentoring at the REU. In particular, we would like to thank Ken Ono for managing the REU, his advice, and valuable comments on this project. We thank Andrew Sutherland and Wei-Lun Tsai for their assistance on SageMath code. We would like to thank Rachel Pries and the anonymous referee for helpful comments and suggestions. Finally, we are grateful for the generous support of the National Science Foundation (Grants DMS 2002265 and DMS 205118), the National Security Agency (Grant H98230-21-1-0059), the Thomas Jefferson Fund at the University of Virginia, and the Templeton World Charity Foundation. 
}

\subjclass[2020]{Primary 11G05; Secondary 14H52, 14F22}

\date{}
\commby{}

\begin{abstract} 
    Let $\ell$ be an odd prime, and let $K$ be a field of characteristic not $2,3,$ or $\ell$ containing a primitive $\ell$-th root of unity. For an elliptic curve $E$ over $K$, we consider the standard Galois representation \[\rho_{E,\ell}: \Gal(\overline{K}/K) \rightarrow \GL_2(\F_{\ell}),\] and denote the fixed field of its kernel by $L$. Recently, the last author gave an algorithm to compute elements in the Brauer group explicitly, deducing an upper bound of $2(\ell+1)(\ell-1)$ on the symbol length in $\mathbin{_{\ell}\Br(E)} / \mathbin{_{\ell}\Br(K)}$. More precisely, the symbol length is bounded above by $2[L:K]$. We improve this bound to $[L:K]-1$ if $\ell \nmid [L:K]$. Under the additional assumption that $\Gal(L/K)$ contains an element of order $d > 1$, we further reduce it to $(1-\frac{1}{d})[L:K]$. In particular, these bounds hold for all CM elliptic curves, in which case we deduce a general upper bound of $\ell + 1$. We provide an algorithm implemented in SageMath to compute these symbols explicitly over number fields.
\end{abstract}

\maketitle

\section{Introduction} \label{sec:introduction2} 

The Brauer group of a variety is an important invariant that sheds light on the geometric and arithmetic properties of the underlying variety. For instance, Artin and Mumford used Brauer groups of varieties to provide a counterexample to the Lüroth problem in \cite{artin1972some}, and Manin showed that for a global field $K$, elements of the Brauer group can obstruct the existence of $K$-points in \cite{manin1971groupe}.

Let $F$ be a field of characteristic coprime to $\ell$ containing a primitive $\ell$-th root of unity $\zeta_{\ell}$. A \textit{symbol algebra} is a central simple algebra of the form\[(a,b)_{\ell,F} =  F \left\langle x, y: x^{\ell} = a, y^{\ell} = b, xy = \zeta_{_\ell} yx \right\rangle.\] A theorem by Merkurjev and Suslin \cite{Mer-Sus} implies that every element in the $\ell$-torsion of the Brauer group of $F$, written as $\mathbin{_{\ell}\Br(F)}$, can be described as a tensor product of symbol algebras over $F$. In particular, there is an isomorphism
$K_2(F)/\ell K_2(F) \xrightarrow{\sim} \mathbin{_\ell\text{Br}(F)},$ explicitly given by $(a,b) \mapsto (a,b)_{\ell,F}$.

For the $\ell$-torsion of the Brauer group of a field, the \textit{symbol length} is the minimal number $n$ such that some representative of every Brauer class can be written as a tensor product of at most $n$ symbol algebras. Bounds on the symbol length may be used to give upper bounds for the degree of abelian (or meta-abelian) Galois splitting fields. For a survey on symbol length, see \cite[Section 3]{auel2011open}.

The symbol length of Brauer classes of prime index is unknown in general, but is conjectured to be $1$. If $F$ is a local or global field containing a primitive $\ell$-th root of unity, then Albert, Brauer, Hasse, and Noether showed that every algebra of exponent $\ell$ is cyclic and has symbol length 1 \cite{albert1932determination, brauer1932beweis}. For $F$ a $C_2$ field containing a certain primitive root of unity, Artin showed that every Brauer class of degree $2$ or $3$ has symbol length 1 \cite{artin917brauer}. Similar results have been proven for Brauer classes defined over a function field of an $\ell$-adic curve $F$. Saltman showed that every Brauer class defined over $F$ of prime index coprime to $\ell$ is cyclic \cite{saltman2007}. Over the same such $F$, work by Suresh for prime order \cite{suresh2010}, and Brussel, McKinnie, and Tengan \cite{brussel2016cyclic} for order coprime to $\ell$, showed that every Brauer class has symbol length at most 2. Matzri gave bounds on the symbol lengths of elements in the Brauer group of a field that contains a $C_m$ field \cite{Matzri}.

The Brauer group $\Br(E)$ of an elliptic curve $E$ defined over a field $K$ is a torsion abelian group and a subgroup of the Brauer group of its function field $\Br(K(E))$. It is also isomorphic to the unramified Brauer group of $K(E)$ by purity. Applying the Merkurjev-Suslin theorem in the setting $F = K(E)$, one can express $\mathbin{_{\ell}\Br(E)}$ as a tensor product of symbol algebras. A description of $\mathbin{_{\ell}\Br(E)}$ in terms of generators and relations is given by Skorobogatov in \cite[Chapter 4]{sko-torsors} provided that the $\ell$-torsion of $E$ is $K$-rational. When $E(K)$ does not contain $E[\ell]$ and $K$ is not of characteristic $2$ or $3$, the explicit generators and relations are given for $\ell=2$ by Chernousov and Guletskii in \cite{chern-gul-2tor} and for odd primes $\ell$ coprime to the characteristic of $K$ by the last author in \cite{ure2019prime}. We will rely on these algorithms to obtain bounds on the symbol length.

Let $\ell$ be an odd prime, and let $K$ be a field of characteristic not 2 or 3 and coprime to $\ell$ containing an $\ell$-th primitive root of unity $\zeta_{\ell}$. We consider an elliptic curve $E$ defined over $K$. Under these assumptions, we have \[\mathbin{_{\ell}\Br(E)} = \mathbin{_{\ell}\Br(K)} \oplus I,\] where $I$ is the tensor product of certain explicit symbol algebras \cite{sko-torsors, chern-gul-2tor, ure2019prime}. More details are given in \Cref{subsec:epsilon}. Denote by \[\rho_{E,\ell}: \Gal(\overline{K}/K) \to \text{Aut}(E[\ell]) \left( \xrightarrow{\sim} \GL_2(\F_{\ell})\right)\] the standard Galois representation obtained by letting elements of the Galois group act on the $\ell$-torsion points of $E$. Recall that since $E[\ell] \simeq \Z/\ell\Z \times \Z/\ell\Z$, picking a $\Z/\ell\Z$-basis $P,Q$ of $E[\ell]$ gives us a natural map from $\Gal(\overline{K}/K) \to \GL_2(\F_{\ell})$. Denote by $L$ the fixed field of the kernel of $\rho_{E,\ell}$.  Equivalently, $L$ is the smallest Galois extension of $K$ such that the $\ell$-torsion points of $E$ are $L$-rational. 

 The algorithm presented by the last author in \cite{ure2019prime} implies that the symbol length in $\mathbin{_{\ell}\Br(E)}/\mathbin{_{\ell}\Br(K)}$ is bounded above by $2[L:K]$ if $\ell \nmid [L:K]$ and $\frac{2}{\ell} [L:K]$ if $\ell \mid [L:K]$. In particular, since $G$ is a subgroup of $\mathrm{SL}_2\left( \mathbb{F}_\ell\right)$, the symbol length is always bounded above by $2 (\ell-1) (\ell+1)$ \cite[Corollary 1.2]{ure2019prime}. If $\ell \nmid [L:K]$, we improve this bound significantly in a series of statements. Write $G := \Gal(L/K)$ and assume throughout that $\ell \nmid [L:K]$.

\begin{restatable}{theorem}{halfbound} \label{thm:half_bound}
    Suppose $\ell \nmid [L:K]$ and $[L:K] > 2$. Then the symbol length of $\mathbin{_{\ell}\Br(E)}/\mathbin{_{\ell}\Br(K)}$ is bounded above by $[L:K]-1$.
\end{restatable}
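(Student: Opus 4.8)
The plan is to replace the naive symbol-by-symbol count coming from Ure's algorithm with a corestriction argument, exploiting the hypothesis $\ell \nmid [L:K]$ together with the classical Rosset--Tate decomposition of a transferred symbol. Write $n = [L:K]$, and recall from \Cref{subsec:epsilon} that over $L$ the $\ell$-torsion of $E$ is rational, so that $\mathbin{_{\ell}\Br(E_L)}/\mathbin{_{\ell}\Br(L)}$ admits the explicit Skorobogatov description as a tensor product of symbol algebras attached to a basis $P,Q$ of $E[\ell]$. First I would observe that, since $\gcd(n,\ell)=1$, multiplication by $n$ is invertible on $\ell$-torsion; hence $\Res_{L/K}$ is injective on $\mathbin{_{\ell}\Br(E)}/\mathbin{_{\ell}\Br(K)}$, and because $\Cor_{L/K}\circ\Res_{L/K}$ is multiplication by $n$, every class $\beta \in I$ satisfies
\[
\beta \;=\; \tfrac{1}{n}\,\Cor_{L/K}\!\big(\Res_{L/K}(\beta)\big).
\]
Thus the symbol length of $\beta$ is controlled entirely by the corestriction of its restriction to the $L$-level.

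The factor of two in the old bound comes from the fact that $\mathbin{_{\ell}\Br(E_L)}/\mathbin{_{\ell}\Br(L)}$ is generated by two symbols. The key structural step is to show that $\Res_{L/K}(I)$, which lands in the $G$-invariants $\big(\mathbin{_{\ell}\Br(E_L)}/\mathbin{_{\ell}\Br(L)}\big)^{G}$, is cyclic and is generated by the class of a \emph{single} symbol algebra $\mathcal{C} = (f,g)_{\ell,L(E)}$ over $L$. Here I would use that $G = \Gal(L/K) \subseteq \SL_2(\F_\ell)$: because $\zeta_\ell \in K$ the determinant character is trivial, so the Weil pairing forces a relation between the two Skorobogatov generators, and the explicit functions recorded in \Cref{subsec:epsilon} allow one to merge them into one symbol by bilinearity. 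Granting this, every restriction $\Res_{L/K}(\beta)$ is a power $\mathcal{C}^{\,i} = (f^{\,i},g)_{\ell,L(E)}$, which is again a single symbol.

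Finally I would invoke the Rosset--Tate theorem, which expresses the corestriction along a degree-$n$ extension of a single symbol as a product of at most $n-1$ symbols. Applying this to $\Cor_{L/K}(\mathcal{C}^{\,i})$ over the function field and absorbing the invertible scalar $\tfrac1n$, I obtain a representative of $\beta$ that is a product of at most $n-1 = [L:K]-1$ symbol algebras over $K(E)$, as claimed. The hypothesis $[L:K] > 2$ ensures $n-1 \ge 2$, so the result is a genuine improvement and excludes the degenerate small-index cases in which the invariant line can behave differently.

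The main obstacle I anticipate is the middle paragraph: proving that $\Res_{L/K}(I)$ is genuinely cyclic and represented by a single symbol rather than a product of two. This requires tracking how the two Skorobogatov generators transform under $G \subseteq \SL_2(\F_\ell)$ and showing that $G$-invariance, combined with the Weil-pairing relation, forces the two symbol slots to merge. Carrying the resulting functions and divisors from \Cref{subsec:epsilon} explicitly through the corestriction will also be what makes the $(n-1)$-symbol representative effective for the SageMath implementation.
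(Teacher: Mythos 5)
Your overall strategy (restrict to $L$, collapse the two Skorobogatov symbols into one, then corestrict and count) is the same as the paper's, but two of your steps have genuine gaps. First, the merging step. You assert that $\Res_{L/K}(I)$ is \emph{cyclic}, generated by a single symbol $(f,g)_{\ell,L(E)}$; that is not the right statement and is false in general, since the image of $\mathcal{N}_{G,P,Q}$ is parametrized by a subgroup of $L^\times/(L^\times)^\ell$ and need not be cyclic. What is actually true, and what the paper proves (\Cref{prop:asa}), is a classwise statement: choosing $\sigma\in G$ of order $d>2$ (which exists because $|G|>2$ and $-I$ is the unique involution of $\SL_2(\F_\ell)$ --- this, not ``$n-1\ge 2$,'' is why the hypothesis $[L:K]>2$ is needed) and taking the basis $P$, $Q=\sigma(P)$ adapted to the rational canonical form of $\sigma$, one computes that every $(\alpha,\beta)\in M_{G,P,Q}$ satisfies $\beta=\sigma^{-1}(\alpha)^{-1}$. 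The two symbols $(\alpha,t_P)$ and $(\beta,t_{\sigma(P)})$ are then \emph{not} merged over $L(E)$; rather, one uses that corestriction is invariant under $\sigma^*$ (\Cref{rem:cor}) to replace the second by $(\alpha, t_{\sigma^2(P)}^{-1})$ without changing its corestriction, so that the corestrictions combine into $\Cor\bigl(\alpha, t_P/\sigma^2(t_P)\bigr)$. Your appeal to ``the Weil-pairing relation'' and ``bilinearity'' does not supply this mechanism.

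Second, the count at the end is off by one. The Rosset--Tate theorem (\Cref{rosset-tate}) bounds the corestriction of a single symbol along a degree-$n$ extension by $n$ symbols, not $n-1$. To shave off the extra symbol one needs the additional input that $\Norm_{L/K}(\alpha)$ is an $\ell$-th power, which the paper derives from the identity $\sum_{g\in G}g=0$ in $M_2(\overline{\F}_\ell)$ for any nontrivial finite $G\le\SL_2(\overline{\F}_\ell)$ (\Cref{lin-alg}, \Cref{lem:im_norm_1}); this makes the constant term of the minimal polynomial of $\alpha$ an $\ell$-th power, so the leading term of the Rosset--Tate expansion is a trivial symbol (\Cref{naive-bound-lemma}), leaving at most $[L:K]-1$. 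Without this norm-one argument your proof only yields the bound $[L:K]$.
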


When there exists some intermediate field $K \subset K' \subset L$, we can further improve our bound on the symbol length of $\mathbin{_{\ell}\Br(E)} / \mathbin{_{\ell}\Br(K)}$.

\begin{restatable}{theorem}{subfieldbound}\label{thm:subfield-bound}
    Suppose $\ell \nmid [L:K]$, $[L:K]>2$, and $G$ contains a subgroup of order $d>1$. Then the symbol length of $\mathbin{_{\ell}\Br(E)}/\mathbin{_{\ell}\Br(K)}$ is bounded above by $\left(1-\frac{1}{d}\right)[L:K]$.
\end{restatable}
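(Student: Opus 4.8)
The plan is to deduce the bound from the half bound of \Cref{thm:half_bound} by descending to the subfield cut out by $H$ and tracking how symbol length changes under corestriction. Let $H\le G$ be the given subgroup of order $d$ and set $K':=L^{H}$. Then $L/K'$ is Galois with group $H$, so $[L:K']=d$ and $[K':K]=[L:K]/d$; moreover $d\mid[L:K]$ forces $\ell\nmid d$, so $E$ over $K'$ satisfies the standing hypotheses, and its $\ell$-division field is again $L$, now of degree $d$ over $K'$.

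First I would check that the splitting $\mathbin{_{\ell}\Br(E)}=\mathbin{_{\ell}\Br(K)}\oplus I$ is compatible with base change, so that $\Res_{K/K'}$ induces a map from $\mathbin{_{\ell}\Br(E)}/\mathbin{_{\ell}\Br(K)}$ to $\mathbin{_{\ell}\Br(E_{K'})}/\mathbin{_{\ell}\Br(K')}$ sending $I$ into the corresponding summand $I'$ for $E$ over $K'$. Fix a class $\xi\in I$. Applying \Cref{thm:half_bound} over $K'$ (valid once $d>2$) shows that $\Res_{K/K'}(\xi)$ is represented by at most $d-1$ symbol algebras over $K'(E)$, i.e. $\len\bigl(\Res_{K/K'}(\xi)\bigr)\le d-1$.

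Next I would corestrict back to $K$. Since $\ell\nmid[K':K]$, the composite $\Cor_{K'/K}\circ\Res_{K/K'}$ is multiplication by the unit $[K':K]$ on $\ell$-torsion, so $\xi=[K':K]^{-1}\,\Cor_{K'/K}\!\bigl(\Res_{K/K'}(\xi)\bigr)$ and no information is lost. The quantitative input is that corestriction along a degree-$n$ extension multiplies symbol length by at most $n$: the corestriction of a single symbol is represented by at most $n$ symbols over the base. Taking $n=[K':K]$ and applying this to each of the $\le d-1$ symbols representing $\Res_{K/K'}(\xi)$ produces a representative of $\xi$ with at most $(d-1)[K':K]=\bigl(1-\tfrac1d\bigr)[L:K]$ symbols.

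The step I expect to be the main obstacle is making the corestriction estimate sharp in this setting: I must show that $\Cor_{K'/K}$ of a symbol over $K'(E)$ really expands into at most $[K':K]$ symbols over $K(E)$---for instance via the projection formula after a suitable normalization of one entry, or a direct norm computation---since any loss here degrades the constant. Two smaller points remain to be settled: confirming the functoriality of the $\mathbin{_{\ell}\Br(K)}\oplus I$ decomposition under $\Res$ and $\Cor$, and dispatching the boundary cases $d\le 2$ (where \Cref{thm:half_bound} does not apply over $K'$) by estimating $\len(I')$ directly; the case $d=[L:K]$, where $K'=K$, simply returns the half bound.
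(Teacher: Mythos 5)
Your overall strategy---factor through $K'=L^{H}$, produce $d-1$ symbols at the intermediate level, and pay a factor of $[K':K]$ on the final corestriction via \Cref{rosset-tate}---is the same as the paper's, and for $d>2$ your route (restrict $\xi$ to $K'$, invoke \Cref{thm:half_bound} over $K'$, corestrict back, and absorb the unit $[K':K]$ into one slot of each symbol) does go through; the functoriality worry you raise is harmless, since it suffices to work modulo constant classes and use that $\Res$ and $\Cor$ carry $\Br(K)$ to $\Br(K')$ and back. The genuine gap is the case $d=2$, which you defer but cannot dispatch "by estimating $\len(I')$ directly," and which is precisely the case the theorem exists for: it is the one invoked in \Cref{cor:NCart-bound} and highlighted in the introduction (bound $\tfrac12[L:K]$ when $G$ has an involution). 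Over $K'$ with $[L:K']=2$ the group $H=\Gal(L/K')$ is $\{\pm I\}$ and contains no element of order $>2$, so \Cref{prop:asa} cannot be applied over $K'$ to fuse $(\a,t_P)$ and $(\b,t_Q)$ into a single symbol; the tools available (\Cref{lem:im_norm_1} plus \Cref{naive-bound-lemma} applied to each factor separately, i.e.\ \Cref{cor:naive-bound} over $K'$) give $2([L:K']-1)=2$ symbols over $K'$, hence $2[K':K]=[L:K]$ after corestricting---twice the claimed bound.

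The paper sidesteps this by choosing a different intermediate class over $K'$: not $\Res_{K/K'}(\xi)$, but $\Cor_{L(E)/K'(E)}\bigl(\a,\,t_P/\sigma^{2}(t_P)\bigr)$, where the single-symbol reduction of \Cref{prop:asa} has already been carried out over $L$ using an element $\sigma$ of order $>2$ in the \emph{full} group $G$ (such $\sigma$ exists whenever $[L:K]>2$, because $-I$ is the only involution in $\SL_2(\F_\ell)$). The norm-one vanishing $\Norm_{L/K'}(\a)=1$ from \Cref{lem:im_norm_1} holds for every nontrivial subgroup $H$, including $|H|=2$, so \Cref{naive-bound-lemma} yields $[L:K']-1=d-1$ symbols over $K'$ uniformly in $d>1$. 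In short: perform the order-$>2$ trick in $G$ before descending, rather than in $H$ afterwards. With that intermediate class substituted, the remainder of your argument (the $[K':K]$-fold expansion under corestriction and the unit normalization) is correct and matches the paper.
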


\begin{remark}
    When $K$ is a local or global field, we obtain the following bounds on the symbol length of $\mathbin{_{\ell}\Br(E)}$. By \cite{albert1932determination}, the symbol length of $\Br(K)$ is $1$, so \Cref{thm:half_bound} implies that when $\ell \nmid [L:K]$ and $[L:K]>2$, the symbol length of $\mathbin{_{\ell}\Br(E)}$ is bounded above by $[L:K]$. If we additionally assume that $G$ contains an element of order $d>1$, then \Cref{thm:subfield-bound} implies that the symbol length is bounded above by $\left(1-\frac{1}{d}\right)[L:K] + 1$.
\end{remark}

Theorem \ref{thm:subfield-bound} is best applicable when the Galois group $G$ has an element of order $2.$ In this case, the symbol length becomes bounded by $\frac{1}{2}[L:K].$

\begin{restatable}{cor}{NCart}\label{cor:NCart-bound}
If $K$ is a number field and the image of $\rho_{E, \ell}$ lies inside the normalizer of a Cartan subgroup, then $\mathbin{_\ell \text{Br}(E)}/\mathbin{_\ell \text{Br}(K)}$ has symbol length bounded above by $\ell - 1$ or $\ell + 1$, corresponding to a split and nonsplit Cartan respectively.
\end{restatable}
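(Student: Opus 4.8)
The plan is to reduce \Cref{cor:NCart-bound} to \Cref{thm:half_bound} and \Cref{thm:subfield-bound} through a purely group-theoretic analysis of the image $G = \rho_{E,\ell}(\Gal(\overline{K}/K)) = \Gal(L/K)$. The first observation I would make is that, because $\zeta_\ell \in K$, the determinant of $\rho_{E,\ell}$ is the mod-$\ell$ cyclotomic character and is therefore trivial, so $G \subseteq \mathrm{SL}_2(\F_\ell)$. Consequently it suffices to understand Cartan subgroups and their normalizers \emph{inside} $\mathrm{SL}_2(\F_\ell)$ rather than in $\GL_2(\F_\ell)$, which is what produces the factor-two improvement in the final bounds.

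Next I would record the relevant orders. A split Cartan intersected with $\mathrm{SL}_2(\F_\ell)$ is the group $\{\mathrm{diag}(t,t^{-1}) : t \in \F_\ell^\times\}$, cyclic of order $\ell - 1$, and its normalizer in $\mathrm{SL}_2(\F_\ell)$ has order $2(\ell-1)$; a nonsplit Cartan intersected with $\mathrm{SL}_2(\F_\ell)$ is the norm-one subgroup of $\F_{\ell^2}^\times$, cyclic of order $\ell + 1$, with normalizer of order $2(\ell+1)$. Hence $[L:K] = |G|$ divides $2(\ell-1)$ in the split case and $2(\ell+1)$ in the nonsplit case. In particular $\ell \nmid [L:K]$ in both cases (as $\ell$ is coprime to $2$, $\ell-1$, and $\ell+1$), so the standing prime-to-$\ell$ hypothesis of both theorems is automatically satisfied.

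The heart of the argument is producing an element of order $2$ in $G$ so that \Cref{thm:subfield-bound} applies with $d=2$, giving the decisive factor $\tfrac12$. I would split into cases. If $G$ is contained in the Cartan itself, then $[L:K]$ divides $\ell-1$ (resp.\ $\ell+1$), and \Cref{thm:half_bound} already gives symbol length at most $[L:K]-1 \le \ell-2$ (resp.\ $\le \ell$), within the claimed bound. If instead $G$ is not contained in the Cartan, then $G$ meets the nontrivial coset of the normalizer modulo the Cartan, and the key computation is that any representative $g \in \mathrm{SL}_2(\F_\ell)$ of that coset satisfies $g^2 = -I$. Writing $g$ as a Cartan element times the anti-diagonal matrix $w = \left(\begin{smallmatrix} 0 & 1 \\ -1 & 0\end{smallmatrix}\right)$ in the split case, or times the Frobenius-conjugation matrix $\mathrm{diag}(1,-1)$ in the nonsplit case, the relations $wcw^{-1} = \bar c$ and $N(c) = -1$ collapse $g^2$ to $-I$. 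Thus $-I \in G$ has order $2$, and \Cref{thm:subfield-bound} with $d=2$ yields symbol length at most $\tfrac12[L:K] \le \ell-1$ in the split case and $\le \ell+1$ in the nonsplit case.

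The main obstacle I anticipate is twofold. First, the order-$2$ computation must be executed carefully and uniformly across the split and nonsplit cases; in the nonsplit case one must verify beforehand that the nontrivial coset contains a determinant-one representative at all (equivalently, that $-1$ is a norm from $\F_{\ell^2}$, which holds since the norm map is surjective) before the identity $g^2 = -I$ can be invoked. Second, \Cref{thm:half_bound} and \Cref{thm:subfield-bound} require $[L:K]>2$, so the degenerate configurations $[L:K] \in \{1,2\}$ must be handled separately; these are small enough that the crude bound $2[L:K]$ from the original algorithm already suffices once $\ell \ge 5$, leaving only finitely many small-$\ell$ cases (notably $\ell = 3$) to be checked by hand.
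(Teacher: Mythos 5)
Your proposal is correct and follows essentially the same route as the paper: both arguments reduce to \Cref{thm:half_bound} and to \Cref{thm:subfield-bound} with $d=2$ after using $G\le\SL_2(\F_\ell)$ to bound $|G|$ by $2(\ell-1)$ or $2(\ell+1)$, the only cosmetic difference being that the paper gets the order-$2$ element from the parity of $|G|$ when $G$ is the full normalizer-intersect-$\SL_2$, whereas you exhibit it explicitly via $g^2=-I$ for $g$ outside the Cartan. The one loose end is the degenerate case $[L:K]=2$ when $\ell=3$ (split), where your crude bound $2[L:K]=4$ exceeds $\ell-1=2$; the paper closes this with \Cref{cor:naive-bound}, which gives $2[L:K]-2=2$ symbols.
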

We note that the assumptions of \Cref{cor:NCart-bound} are satisfied by CM elliptic curves over a number field $K$, and we refer the reader to \Cref{rem:CM-image} for a complete discussion.

This paper is organized as follows. In \Cref{sec:preliminaries}, we introduce preliminary definitions and describe the algorithm from \cite{ure2019prime} to explicitly compute the generators of $\mathbin{_{\ell}\Br(E)}/\mathbin{_{\ell}\Br(K)}$. In \Cref{sec:image-norm}, we prove our first bounds on the symbol length. In \Cref{sec:bounds}, we improve these bounds on the symbol length and prove our main theorems. We conclude with some illustrative examples of our main theorems in \Cref{sec:examples} and compute bounds on the symbol length of $\mathbin{_{5}\Br(E)}/\mathbin{_{5}\Br(K)}$ for a CM elliptic curve $E$.

Throughout this paper, let $\ell$ be an odd prime, $K$ a number field of characteristic different from $2$, $3$, and $\ell$ containing a primitive $\ell$-th root of unity $\zeta_\ell$. Denote a separable closure of $K$ by $\overline{K}$ and for any separable extension $K'$ of $K$ let $G_{K'}$ be the absolute Galois group $\Gal(\overline{K}/K')$. Fix an elliptic curve $E$ over $K$ and let $L$ be the smallest field extension of $K$ so that the $\ell$-torsion $E[\ell]$ is $L$-rational. 

\section{Preliminaries} \label{sec:preliminaries} 

We begin by discussing the theory needed to bound the symbol length in the Brauer group of an elliptic curve. 

\subsection{Elliptic Curves and Galois Representations} 
Fix a basis $P, Q$ of the torsion subgroup $E[\ell]$ so that the Weil-pairing $\text{e}(-,-)$ satisfies $\text{e}(P,Q) = \zeta_\ell$. Recall that by letting elements of the Galois group act on $E[\ell]$, we obtain the standard Galois representation by $\ell$: \[ \rho_{E,\ell}: \Gal(\overline{K}/K) \to \GL_2(\F_{\ell}).\]

\begin{remark}\label{imagegroup}
Since we assume that $K$ contains a primitive $\ell$-th root of unity $\zeta_{_\ell}$, we have $\mathrm{Im } \rho_{E, \ell} \le SL_2(\mathbb{F}_{\ell})$. To see this, let $\sigma \in G_K$ with $\rho_{E, \ell}(\sigma) = \begin{pmatrix} a & b \\ c & d \end{pmatrix}$. Noting that the Weil pairing commutes with the Galois action through its determinant \cite[Chapter 3, Section 8]{silverman2009arithmetic}, we have {\small $$\zeta_{_\ell} = \sigma(\zeta_{_\ell}) = \sigma \text{e}(P, Q) = \text{e}(\sigma(P), \sigma(Q)) = \text{e}(aP + cQ, bP + dQ) = \text{e}(P, Q)^{ad-bc} = \zeta_{_\ell}^{ad-bc},$$} so $ad - bc = 1\mod \ell$ and $\rho_{E, \ell}(\sigma) \in SL_2(\F_{\ell})$.
\end{remark}

By examining admissible subgroups of $\GL_2(\F_{\ell})$, we can determine the structure of $G$, which will play a large role in improving bounds on the symbol length. A \textit{Borel subgroup} of $\GL_2(\mathbb{F}_{\ell})$ is any group conjugate to the group of matrices of the form $
        \begin{pmatrix} a & b \\ 0 & d\end{pmatrix}.$
A \textit{Cartan subgroup} of $\GL_2(\mathbb{F}_{\ell})$ is a maximal abelian subgroup that is semisimple. A Cartan subgroup is of index 2 in its normalizer. There are two types of Cartan subgroups. A \textit{split Cartan subgroup} is a Cartan subgroup that is conjugate to the subgroup of diagonal matrices of the form $ \begin{pmatrix} a & 0 \\ 0 & d \end{pmatrix},$ and is isomorphic to $\F_{\ell}^{\times} \times \F_{\ell}^{\times}$. Let $\varepsilon \in \F_{\ell}^{\times}$ be a non-square. A \textit{nonsplit Cartan subgroup} is a Cartan subgroup that is conjugate to the group of matrices of the form $ \begin{pmatrix} a & \varepsilon b \\ b & a \end{pmatrix},$ and is isomorphic to $\F_{\ell^2}^{\times}$  \cite{lang2012introduction, lmfdb}. If $G \neq \SL_2(\mathbb{F}_{\ell})$, then it satisfies one of the following cases \cite[p. 261]{serre1979}:
    \begin{enumerate}
        \item $G \subset $ Borel,
        \item $G \subset $ Cartan,
        \item $G \subset $ normalizer of a Cartan but not the Cartan,
        \item $PG \cong S_4$, where $PG$ denotes the image of $G$ under the projectivization $\GL_2(\F_{\ell}) \mapsto \mathrm{PGL}_2(\F_{\ell})$.
    \end{enumerate}

\begin{remark}\label{rem:CM-image}
By \cite{lang2012introduction} we know that each of the three cases above corresponds to $G$ having a different image $PG$ in the projectivization $\mathrm{PGL}_2(\F_\ell)$. If we assume that $E$ has CM, by comparing the number of order two elements in each of these images with the expected number of order two elements in $PG$ from the Chebotarev density theorem, we see that $G$ is always contained in the normalizer of a Cartan but not the Cartan. Hence if $E$ is a CM elliptic curve, we always have $\ell \nmid [L:K]$.

\end{remark}

\subsubsection{Divisors}
Recall that for a point $P \in \overline{E}$, we denote by $t_P$ any function in $\overline{K}(E)$ with divisor equal to $\ell[P] - \ell[\mathcal{O}]$. Recall also that $\sigma(t_P) = t_{\sigma(P)}$ for any $\sigma \in G_K$. Denote by $L_{P,Q}$ the line in $\P^2$ which passes through $P$ and $Q$. The divisor of $L_{P,Q}$ is equal to $[P] + [Q] + [-P-Q] - 3[\mathcal{O}]$.
\begin{proposition} \label{prop:tPmult}
Up to $\ell$th powers in $L(E)$, we may assume that  \[ t_P^{-1} = t_{-P} \qquad \text{and}\qquad t_P \cdot t_Q = t_{P+Q}.\]
\end{proposition}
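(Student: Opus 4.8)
The plan is to upgrade the assignment $P \mapsto t_P$ to an honest group homomorphism from $E[\ell]$ into $M := L(E)^\times/(L(E)^\times)^\ell$ after correcting each $t_P$ by a constant. The observation that makes this possible is that every class $[t_P]$ is $\ell$-torsion in $M$, since $t_P^\ell$ is manifestly an $\ell$-th power; thus one can hope to realize $P \mapsto [t_P]$ as a homomorphism out of $E[\ell] \cong (\Z/\ell\Z)^2$, and the two displayed identities are exactly the statement that such a homomorphism exists.

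First I would record the divisor identities. As $P, Q \in E(L)$, the function $g_P := x - x(P) \in L(E)$ has $\operatorname{div}(g_P) = [P] + [-P] - 2[\mathcal{O}]$, and $h_{P,Q} := L_{P,Q}/g_{P+Q} \in L(E)$ has $\operatorname{div}(h_{P,Q}) = [P] + [Q] - [P+Q] - [\mathcal{O}]$, using the divisor of $L_{P,Q}$ recalled above. Comparing divisors gives $\operatorname{div}(t_P t_{-P}) = \ell\operatorname{div}(g_P)$ and $\operatorname{div}(t_P t_Q t_{P+Q}^{-1}) = \ell\operatorname{div}(h_{P,Q})$, whence
\[
t_P t_{-P} = c_P\, g_P^{\ell}, \qquad t_P t_Q t_{P+Q}^{-1} = c_{P,Q}\, h_{P,Q}^{\ell}
\]
for some constants $c_P, c_{P,Q} \in L^\times$. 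In $M$ these say that $[t_P] + [t_{-P}]$ and $[t_P] + [t_Q] - [t_{P+Q}]$ both lie in the image of $L^\times$; note that this image is isomorphic to $L^\times/(L^\times)^\ell$, since a constant which is an $\ell$-th power in $L(E)$ must already be an $\ell$-th power in $L$ (its divisor forces the $\ell$-th root to be constant).

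Next I would let $\psi\colon E[\ell] \to M$ be the unique homomorphism with $\psi(P) = [t_P]$ and $\psi(Q) = [t_Q]$, which is well defined because $[t_P]$ and $[t_Q]$ are $\ell$-torsion. Comparing the divisors of $t_P^m t_Q^n$ and $t_{mP+nQ}$ — they agree up to $\ell$ times a principal divisor — shows that, for every $R = mP + nQ$, the class $\psi(R) = m[t_P] + n[t_Q]$ differs from $[t_R]$ only by the class of a constant. Since multiplying $t_R$ by a constant leaves its divisor unchanged, I can therefore rescale each $t_R$ (keeping $t_{\mathcal{O}} = 1$) so that $[t_R] = \psi(R)$ holds exactly in $M$. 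With this normalization both relations are immediate: $[t_{-P}] = \psi(-P) = -[t_P]$ gives $t_{-P} = t_P^{-1}$, and $[t_{P+Q}] = \psi(P) + \psi(Q) = [t_P] + [t_Q]$ gives $t_P t_Q = t_{P+Q}$, each modulo $\ell$-th powers.

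I expect the only real obstacle to be the bookkeeping of constants. Fixing any single relation by a rescaling is trivial, but the rescalings attached to different pairs with a common sum must be mutually consistent; routing everything through the one homomorphism $\psi$ — which is legitimate precisely because each $[t_P]$ is $\ell$-torsion — forces this consistency automatically and is the crux of the argument. The remaining technical point is to confirm that these constant corrections interact acceptably with the normalization $\sigma(t_P) = t_{\sigma(P)}$; from the equivariance of the original functions one sees that $\psi$ is Galois-equivariant up to constants, so the relations continue to hold Galois-compatibly modulo constants and $\ell$-th powers, which is the level of compatibility needed here.
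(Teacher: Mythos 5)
Your proof is correct, but it takes a noticeably different route from the paper's. The paper's entire proof consists of two explicit divisor computations: $\Div\bigl(L_{P,-P}^{\ell}/t_P\bigr)=\ell[-P]-\ell[\mathcal{O}]$ and $\Div\bigl(t_P\cdot t_Q\cdot L_{-P,-Q}^{\ell}/(L_{P,-P}^{\ell}\cdot L_{Q,-Q}^{\ell})\bigr)=\ell[P+Q]-\ell[\mathcal{O}]$, so these two functions are themselves admissible choices of $t_{-P}$ and $t_{P+Q}$, and they are visibly equal to $t_P^{-1}$ and $t_P\cdot t_Q$ modulo $\ell$-th powers in $L(E)$; no undetermined constants ever appear. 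You instead show abstractly that after rescaling each $t_R$ by a constant, the assignment $R\mapsto [t_R]$ becomes a homomorphism $E[\ell]\to L(E)^{\times}/(L(E)^{\times})^{\ell}$, from which both identities follow at once. The underlying calculus is the same --- in each case the point is that the relevant divisors differ by $\ell$ times a principal divisor --- but the two packagings buy different things. The paper's version produces the explicit auxiliary functions that its algorithm and SageMath code actually use, and it kills the constants by construction, since the correcting factor is exhibited as an honest $\ell$-th power of a line. Your version is less explicit but settles a consistency point the one-line proof leaves implicit: that a single global normalization of the family $\{t_R\}_{R\in E[\ell]}$ satisfies all the identities simultaneously for every pair of torsion points. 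Your closing caveat about compatibility with $\sigma(t_P)=t_{\sigma(P)}$ is the right thing to flag and is left at the level of ``up to constants''; this is harmless for the paper's purposes because the identities are only ever invoked modulo $\ell$-th powers inside symbols, and a residual constant from $L^{\times}$ contributes a class coming from $\mathbin{_\ell\Br(L)}$ whose corestriction lands in $\mathbin{_\ell\Br(K)}$, which is quotiented out in all the symbol-length statements.
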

\begin{proof}
These identities follow from 
\[\Div\left(\tfrac{L_{P,-P}^\ell}{t_P}\right)=\ell[-P] - \ell[\mathcal{O}] \qquad \text{and} \qquad \Div\left(\tfrac{t_P\cdot t_Q \cdot L_{-P,-Q}^\ell}{L_{P,-P}^\ell \cdot L_{Q,-Q}^\ell}\right) = \ell[P + Q] - \ell[\mathcal{O}].\qedhere \]
\end{proof}

\subsection{Brauer Groups and Group Cohomology}
We review some facts about cohomology. For more details, see \cite{neukirch2013cohomology} and \cite{Lichtenbaum-duality}.
\subsubsection{Brauer Groups}
Recall that for a field $F$ with separable closure $\overline{F}$, the Brauer group is $H^2(G_F, \overline{F})$. The Brauer group $\Br(E)$ of an elliptic curve $E$ can also be defined in terms of \'etale cohomology. However, we forgo this definition in favor of identifying $\Br(E)$ as a subgroup of the Brauer group of its function field $K(E)$. We give this identification explicitly as follows. 

Denoting by $\text{Div}(\overline{E})$ the divisors of $\overline{E}$, we have a map $\overline{K}(E)^\times \to \text{Div}(\overline{E})$ by sending each function to its divisor. This induces a map in cohomology as \[H^2(G_K, \overline{K}(E)^\times) \to H^2(G_K, \text{Div}(\overline{E})).\]
The kernel of this map is $\Br(E)$. One can verify that this is equivalent to the \'etale cohomology definition in \cite[Section 2]{Lichtenbaum-duality}.

\subsubsection{Restriction and Corestriction}
If $\mathcal{G}$ is any group, $\mathcal{H}$ is a normal finite index subgroup of $\mathcal{G}$, and $A$ is a $\mathcal{G}$-module, then we have the corestriction map $\Cor: H^n(\mathcal{H},A) \to H^n(\mathcal{G},A)$ and restriction map $\Res: H^n(\mathcal{G},A) \to H^n(\mathcal{H},A)$.

\begin{proposition} \label{prop:cor-is-norm} \cite{neukirch2013cohomology}
The map \[\Cor \circ \Res: H^n(\mathcal{G},A) \to H^n(\mathcal{G},A)\] is equal to the map of multiplication by $[\mathcal{G}:\mathcal{H}]$. When $n=0$ so that $H^0(\mathcal{G},A) = A^\mathcal{G}$, the elements of $A$ fixed by $\mathcal{G}$, the map \[\Res \circ \Cor: H^0(\mathcal{H},A) \to H^0(\mathcal{H},A)\] sends an element $a$ to $\sum_{s \in \mathcal{G}/\mathcal{H}}s \cdot a$.
\end{proposition}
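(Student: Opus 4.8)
The plan is to reduce both assertions to a single explicit computation available in cohomological degree zero and then propagate upward by dimension shifting. The key structural input is that both $\Res$ and $\Cor$ are morphisms of cohomological $\delta$-functors in the variable $A$: each commutes with the connecting homomorphisms arising from a short exact sequence of $\mathcal{G}$-modules $0 \to A \to B \to C \to 0$. Since multiplication by the integer $[\mathcal{G}:\mathcal{H}]$ is trivially such a morphism as well, and since group cohomology $H^\bullet(\mathcal{G}, -)$ is effaceable in positive degrees and hence a universal $\delta$-functor, it suffices to verify the identity $\Cor \circ \Res = [\mathcal{G}:\mathcal{H}]$ in degree $n=0$; the general case then follows because two morphisms of a universal $\delta$-functor that agree in degree $0$ agree in all degrees.

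First I would record the explicit degree-zero descriptions. We have $H^0(\mathcal{G}, A) = A^{\mathcal{G}}$ and $H^0(\mathcal{H}, A) = A^{\mathcal{H}}$; the restriction map $\Res \colon A^{\mathcal{G}} \to A^{\mathcal{H}}$ is simply the inclusion (an element fixed by $\mathcal{G}$ is a fortiori fixed by $\mathcal{H}$), and the corestriction map $\Cor \colon A^{\mathcal{H}} \to A^{\mathcal{G}}$ is the norm map $a \mapsto \sum_{s \in \mathcal{G}/\mathcal{H}} s \cdot a$. Two routine checks belong here: that the sum is independent of the choice of coset representatives (it is, since $a \in A^{\mathcal{H}}$ absorbs the ambiguity $s \mapsto sh$), and that it lands in $A^{\mathcal{G}}$ (left multiplication by $g \in \mathcal{G}$ merely permutes the cosets). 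Granting this, the first claim is immediate in degree zero: for $a \in A^{\mathcal{G}}$ we have $s \cdot a = a$ for every $s$, so $\Cor(\Res(a)) = \sum_{s \in \mathcal{G}/\mathcal{H}} s \cdot a = [\mathcal{G}:\mathcal{H}]\, a$. The second claim is then nothing more than unwinding the same formulas in the opposite order: $\Res \circ \Cor \colon A^{\mathcal{H}} \to A^{\mathcal{H}}$ is the norm map followed by the inclusion $A^{\mathcal{G}} \hookrightarrow A^{\mathcal{H}}$, so it sends $a \mapsto \sum_{s \in \mathcal{G}/\mathcal{H}} s \cdot a$, exactly as asserted. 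Since this statement is claimed only in degree $0$, no dimension shifting is needed for it.

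I expect the genuine work to sit not in the computations above but in justifying the two structural facts that make the dimension-shifting argument legitimate: (i) that corestriction, however one defines it on cochains via coset representatives, reduces in degree zero to the norm map; and (ii) that both $\Res$ and $\Cor$ commute with the connecting homomorphisms. Fact (i) is a direct but slightly fiddly unwinding of the cochain-level definition of $\Cor$, and fact (ii) is the standard compatibility of transfer maps with long exact sequences. Because the statement is quoted from \cite{neukirch2013cohomology}, I would cite these two facts from the standard references rather than reprove them, leaving the short degree-zero computation as the only explicit content.
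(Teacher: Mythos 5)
Your argument is correct. The paper offers no proof of this proposition --- it is quoted directly from \cite{neukirch2013cohomology} --- and your route (the degree-zero identification of $\Res$ with the inclusion $A^{\mathcal{G}} \hookrightarrow A^{\mathcal{H}}$ and of $\Cor$ with the norm $a \mapsto \sum_{s \in \mathcal{G}/\mathcal{H}} s \cdot a$, followed by propagation to all $n$ via universality of the $\delta$-functor $H^{\bullet}(\mathcal{G},-)$) is exactly the standard argument in that reference, so there is nothing to correct.
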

Thus we see that corestriction acts like a norm map with respect to the group $\mathcal{G}/\mathcal{H}$. When $\mathcal{G} = G_K$ and $\mathcal{H} = G_L$ are absolute Galois groups of a field extension $L/K$, we write these maps as $\Cor_{L/K}$ and $\Res_{K/L}$. However, when the fields $L$ and $K$ are implied, we will ignore the subscripts. Using explicit descriptions of these maps to bound the symbol length will be the chief goal of this paper. 

\subsubsection{Galois Action on Symbols}
For any element $\sigma \in G_K$ we have an action $\sigma^*$ on the Brauer groups $\Br(L(E))$ that fixes $\Br(K(E))$. Furthermore, the elements of $G_L$ act trivially on $\text{Br}(L(E))$ and $\text{Br}(K(E))$, so in fact we have an action of $G_K/G_L = \Gal(L/K) = G$, commuting with corestriction \cite[Chapter 1]{neukirch2013cohomology}. This action restricts to give us an action of $G$ on $\text{Br}(E_L)$ and $\text{Br}(E)$. We can explicitly determine the action on symbols by the following lemma.
\begin{lemma}\cite[p. 85]{draxl-skew} \label{lem:action-on-symbs}
For any $\sigma \in \text{Gal}(\overline{K(E)}/K(E))$, the action of $\sigma^*$ on cohomology groups is given explicitly by \[\sigma^* (\a,\b)_{\ell, L(E)} = (\sigma(\a), \sigma(\b))_{\ell, L(E)}.\]
\end{lemma}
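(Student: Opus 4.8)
The plan is to recast everything through Galois cohomology and reduce the formula to the naturality of the cup product under the action of the quotient $G_{K(E)}/G_{L(E)} = \Gal(L(E)/K(E))$, using crucially that $\sigma$ fixes $\zeta_\ell$. Since $\zeta_\ell \in K \subseteq K(E)$ is fixed by every $\sigma \in G_{K(E)}$, the Kummer sequence identifies $L(E)^\times/(L(E)^\times)^\ell$ with $H^1(G_{L(E)}, \mu_\ell)$, sending a class $a$ to the cocycle $\chi_a \colon \tau \mapsto \tau(\alpha)/\alpha$ for any $\ell$-th root $\alpha$ of $a$; and under the Merkurjev--Suslin dictionary the symbol $(\a,\b)_{\ell,L(E)}$ is the image in $H^2(G_{L(E)}, \overline{L(E)}^\times)$ of the cup product $\chi_\a \cup \chi_\b$, where $\zeta_\ell$ is used to trivialize the Tate twist $\mu_\ell^{\otimes 2}\cong\mu_\ell$. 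So it suffices to compute how $\sigma^*$ acts on the Kummer classes and on the cup product.

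First I would unwind the definition of the $\Gal(L(E)/K(E))$-action recalled before \Cref{lem:action-on-symbs}: for a cocycle $f$ one has $(\sigma^* f)(\tau) = \sigma\big(f(\sigma^{-1}\tau\sigma)\big)$, which descends to the quotient because inner automorphisms act trivially on cohomology. The crux is then the single identity
\[
(\sigma^*\chi_\a)(\tau) = \sigma\big((\sigma^{-1}\tau\sigma)(\alpha)/\alpha\big) = \tau(\sigma\alpha)/(\sigma\alpha) = \chi_{\sigma(\a)}(\tau),
\]
valid since $(\sigma\alpha)^\ell = \sigma(\a)$ makes $\sigma\alpha$ an $\ell$-th root of $\sigma(\a)$, and since $\mu_\ell \subseteq L(E)$ is fixed by $\tau \in G_{L(E)}$ the value is independent of the chosen root. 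Thus $\sigma^*\chi_\a = \chi_{\sigma(\a)}$ and likewise $\sigma^*\chi_\b = \chi_{\sigma(\b)}$. Invoking the $G$-equivariance of the cup product gives $\sigma^*(\chi_\a\cup\chi_\b) = \chi_{\sigma(\a)}\cup\chi_{\sigma(\b)}$, which reads back as $(\sigma(\a),\sigma(\b))_{\ell,L(E)}$. Here the hypothesis that $\sigma$ fixes $\zeta_\ell$ enters a second time, guaranteeing that the trivialization $\mu_\ell^{\otimes 2}\cong\mu_\ell$ is itself $\sigma^*$-equivariant, so no stray root-of-unity factor is introduced when transporting the symbol through the action.

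I expect the main obstacle to be bookkeeping rather than conceptual: making the conjugation defining $\sigma^*$ cancel correctly against the Kummer cocycle (the step $\sigma\circ\sigma^{-1}\tau\sigma = \tau\sigma$ above), and confirming the cup-product/symbol dictionary is compatible with the relative $\Gal(L(E)/K(E))$-action and not merely the absolute one. A cleaner alternative, and essentially the argument in \cite[p.~85]{draxl-skew}, avoids cohomology altogether: represent the class as the central simple algebra $L(E)\langle x,y : x^\ell = \a,\ y^\ell = \b,\ xy = \zeta_\ell yx\rangle$ and note that $\sigma^*$ sends a Brauer class to that of its $\sigma$-twist, whose defining relations are obtained by applying $\sigma$ to all structure constants; since $\sigma(\zeta_\ell) = \zeta_\ell$ this twist is visibly $(\sigma(\a),\sigma(\b))_{\ell,L(E)}$, giving the formula immediately.
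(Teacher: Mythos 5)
Your proposal is correct. The paper offers no proof of this lemma at all---it is quoted directly from Draxl \cite[p.~85]{draxl-skew}---and your closing algebra-level argument (applying $\sigma$ to the structure constants of the presented algebra $L(E)\langle x,y\rangle$ and using $\sigma(\zeta_\ell)=\zeta_\ell$) is exactly the argument of the cited source. Your cohomological computation via Kummer cocycles and the cup product is a correct, self-contained alternative, and you rightly flag the two places where $\sigma(\zeta_\ell)=\zeta_\ell$ is needed (well-definedness of $\chi_{\sigma(\a)}$ and equivariance of the trivialization $\mu_\ell^{\otimes 2}\cong\mu_\ell$).
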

In our setting, since we only consider the action of elements of $G$, which act trivially on $\Br(K(E))$, we observe the following.

\begin{remark} \label{rem:cor}
For any $(\a, \b) \in \Br(E_L)$, we have {\small \[\Cor(\sigma(\a), \sigma(\b))_{\ell, L(E)} = \Cor\left(\sigma^*(\a,\b)_{\ell, L(E)}\right) = \sigma^*\Cor(\a,\b)_{\ell, L(E)} = \Cor(\a,\b)_{\ell, L(E)}.\]}
\end{remark}

\begin{remark} \cite[p. 209]{serre1979}
    If $\a \in K(E)^{\times}$, $\b \in L(E)^{\times}$, we have \[\Cor_{L/K}(\a, \b)_{\ell, L(E)} = \left(\a, \Norm_{L(E)/K(E)}(\b)\right)_{\ell, K(E)}.\]
\end{remark}

\subsection{Explicit Descriptions of Restriction and Corestriction} 

We now explicitly apply the Galois action on cohomology. By our assumption $\ell \nmid [L:K]$ and \Cref{prop:cor-is-norm}, we see that for any $G_K$-module $A$, the corestriction map $\mathbin{_\ell H^n(G_L,A)} \to \mathbin{_\ell H^n(G_K, A)}$ is surjective.

\subsubsection{Describing Restriction}
We will explicitly describe the image of the restriction $H^1(G_K, E[\ell]) \to H^1(G_L,E[\ell])$.
Since corestriction is surjective, it is equal to the image of $\Res \circ \Cor$.
\begin{defn}
    Denote by $\mathcal{N}$ the map \[ \Res \circ \Cor: H^1(G_L,E[\ell]) \to H^1(G_L,E[\ell]).\] For a different choice of subfield $K' \subset L$ with $H = \Gal(L/K')$, we write \[\Res_{L/K'} \circ \Cor_{L/K'} = \mathcal{N}_H.\]
\end{defn}
If we pick an $\mathbb{F}_\ell$-basis $P$, $Q$ of $E[\ell]$, then we have an isomorphism \begin{equation}\label{eq:Kummer} H^1(G_L,E[\ell]) \cong_{P,Q} H^1(G_L, \Z/\ell\Z \times \Z/\ell \Z) \cong H^1(G_L, \Z/\ell\Z) \times H^1(G_L, \Z/\ell\Z ). \end{equation} 
Kummer theory gives us an isomorphism of $H^1(G_L, \Z/\ell\Z)$ with $L^\times/(L^\times)^\ell$. Since we have an action of $G$ on $H^1(G_L, E[\ell])$, we may therefore transport it via these isomorphisms to an action of $G$ on $L^{\times}/(L^{\times})^\ell \times L^{\times}/(L^{\times})^\ell$. Explicitly, this action is given by \[ g \cdot (a,b) = \left(\left(g^{-1}(a)\right)^{c_1^g}\left(g^{-1}(b)\right)^{c_3^g},\left(g^{-1}(a)\right)^{c_2^g}\left(g^{-1}(b)\right)^{c_4^g}\right),\] where $g^{-1} = \begin{pmatrix} c_1^g & c_3^g\\ c_2^g & c_4^g \end{pmatrix}$ in terms of the basis $P$, $Q$ of $E[\ell]$ \cite[Remark 5.4]{ure2019prime}.

\begin{defn}\label{def:funny-norm}
    We denote by $\mathcal{N}_{P,Q}$ or $\mathcal{N}_{G,P,Q}$ the composition \[\tfrac{L^{\times}}{(L^{\times})^\ell} \times \tfrac{L^{\times}}{(L^{\times})^\ell} \xrightarrow{\sim_{P,Q}}H^1(G_L, E[\ell]) \xrightarrow{\mathcal{N}_G} H^1(G_L, E[\ell])\xrightarrow{\sim_{P,Q}} \tfrac{L^{\times}}{(L^{\times})^\ell} \times \tfrac{L^{\times}}{(L^{\times})^\ell},\]
    where $\sim_{P,Q}$ comes from the map above in \cref{eq:Kummer} and Kummer theory. We denote the image of the map $\mathcal{N}_{P,Q}$ by {\small \[M_{G,P,Q} =  \left\{ \mathcal{N}_{P,Q}(a,b)= \prod_{g \in G}g \cdot(a,b): a, b \in L^{\times}/(L^{\times})^\ell \right\}.\]}
\end{defn}
Note that this map is a norm with respect to the action of $G$ on $\tfrac{L^{\times}}{(L^{\times})^\ell} \times \tfrac{L^{\times}}{(L^{\times})^\ell}$.

\subsubsection{Describing Corestriction}
Our next goal is to describe the image of the corestriction map in the case of symbol algebras. We can explicitly describe the corestriction of a pair of symbols $(\a,\b) \in K_2(F)$ using the proposition from \cite[Section 3]{Rosset-Tate-1983}. By Corollary 2 of the same section, this restricts to give us the corestriction on the $\ell$-torsion of Brauer groups.
We reproduce the algorithm here for clarity of exposition.

\begin{defn}
    For a polynomial $p(t) = a_nt^n + a_{n-1}t^{n-1}+ \cdots + a_mt^m$ of degree $n$, define $p^{*}(t) = \frac{p(t)}{a_m t^m}$ and $c(p) = (-1)^na_n$.
\end{defn}

\begin{alg}[Rosset-Tate Algorithm {\cite[p. 44]{Rosset-Tate-1983}} \label{rosset-tate}]
Let $F'/F$ be a finite extension of fields over $K$, and let $x, y \in (F')^*$. Let $g(t) \in F[t]$ be the minimal polynomial of $x$ over $F$, and let $f(t) \in F[t]$ be the polynomial of smallest degree such that $\text{Norm}_{F'/F(x)}(y) = f(x)$. Define a sequence of nonzero polynomials $g_0, \dots, g_m$ as follows. Set $g_0 = g$, $g_1 = f$, and for $i \ge 1$, let $g_{i+1}$ be the remainder of the division of $g_{i-1}^{*}$ by $g_i$ as long as $g_i \neq 0$. Then the corestriction of the symbol $(x,y)_{\ell,F'}$ is given explicitly by $$ \Cor_{F'/F}(x,y)_{F'}=-\sum_{i=1}^{m}(c(g_{i-1}^*), c(g_{i}))_{F}.$$
Note that the degree of $g_i$ strictly decreases on each step, so $m \leq \deg g_0 \leq [F':F]$.
\end{alg}

\subsection{Brauer Group of an Elliptic Curve} \label{subsec:epsilon}

We seek to understand the prime torsion of $\Br(E)$ through the exact sequence 

\[\begin{tikzcd}[ampersand replacement=\&]
	0 \& {\mathbin{_\ell}\text{Br}(K)} \& {\mathbin{_\ell}\text{Br}(E)} \& {\mathbin{_\ell H^1(G_K, \overline{E})}} \& 0
	\arrow[from=1-4, to=1-5]
	\arrow["r"', from=1-3, to=1-4, swap]
	\arrow["i"', from=1-2, to=1-3, swap]
	\arrow[from=1-1, to=1-2]
\end{tikzcd}.\]
This exact sequence comes from the Hochschild-Serre spectral sequence, with the map $i$ taking the class of an algebra $A$ to the class of $A \otimes K(E)$. For details, see \cite[p. 63]{sko-torsors} or \cite[Section 2]{Lichtenbaum-duality}.

We will now describe a split to $r$ that gives us the desired decomposition of ${\mathbin{_\ell}\text{Br}(E)}$. For this, we will use a map $\varepsilon: H^1(G_K, E[\ell]) \to \mathbin{_\ell}\Br(E)$. For reasons given by \cite{chern-gul-2tor} and \cite{ure2019prime}, this will induce a split to $r$, allowing us to write $\mathbin{_\ell \text{Br}(E)} = \mathbin{_\ell \text{Br}(K)} \oplus \text{Im}(\varepsilon)$.  There is an interpretation of this map as a cup product given in \cite{sko-cup-gens}. By \cite{lenstra-k2}, when $K$ is a number field all elements of $\text{Br}(K)$ are symbols, and in general, $\Br(K)$ will not be too complicated, so we instead examine the image of $\varepsilon$.

In the special case where $K$ contains all $\ell$-torsion points (i.e., $K=L$), we can define our map $\varepsilon$ as follows.
First, fix a basis $P, Q$ for $E[\ell]$, which gives us an isomorphism $E[\ell] \xrightarrow{\sim} \Z/\ell\Z \times \Z/\ell\Z$ as $G_K$-modules. Then there is an isomorphism from $H^1(K,E[\ell])$ to $\left((K^\times/(K^\times)^\ell\right)^2$ by Kummer theory. The map $\varepsilon$ is given by \[(a,b) \mapsto (a, t_P)_{\ell, K(E)} \otimes (b,t_Q)_{\ell, K(E)}.\]

In the more general case where $K \neq L$, but we still have that $\ell \nmid [L:K]$, so that restriction is injective and corestriction is surjective, we define $\bar{\varepsilon}$ as before and use the following diagram to define $\varepsilon$ as the composition $[L:K] \circ \varepsilon = \Cor \circ \bar{\varepsilon} \circ \Res$ \cite{ure2019prime}:

\[\scalebox{0.75}{\begin{tikzcd}[ampersand replacement = \&, column sep=small, row sep=small]
	\&\&\& 0 \\
	\&\&\& {E_K/\ell E_K} \\
	\&\&\& {H^1\left(G_K, E[\ell]\right)} \&\& 0 \\
	0 \& {\mathbin{_\ell\text{Br}}(K)} \& {\mathbin{_\ell\text{Br}}(E)} \& {\mathbin{_\ell H^1\left(G_K, \overline{E}\right)}} \& 0 \& {E_L/\ell E_L} \& {} \\
	\&\&\& 0 \&\& {H^1\left(G_L, E[\ell]\right) \cong_{P,Q} \left((L^\times/(L^\times)^\ell\right)^2} \& {} \\
	\&\& 0 \& {\mathbin{_\ell\text{Br}}( L)} \& {\mathbin{_\ell\text{Br}}(E_L)} \& {\mathbin{_\ell H^1\left(G_L, \overline{E}\right)}} \& 0 \\
	\&\&\&\&\& 0
	\arrow[from=1-4, to=2-4]
	\arrow[from=2-4, to=3-4]
	\arrow[from=3-4, to=4-4]
	\arrow[from=4-4, to=5-4]
	\arrow[from=4-3, to=4-4]
	\arrow[from=4-2, to=4-3]
	\arrow[from=4-1, to=4-2]
	\arrow[from=6-6, to=6-7]
	\arrow[from=6-5, to=6-6]
	\arrow[from=6-4, to=6-5]
	\arrow[from=4-4, to=4-5]
	\arrow["\Res", color={rgb,255:red,214;green,92;blue,92}, curve={height=-12pt}, from=3-4, to=5-6]
	\arrow["\Cor", color={rgb,255:red,214;green,92;blue,92}, curve={height=-12pt}, from=6-5, to=4-3]
	\arrow[from=6-3, to=6-4]
	\arrow[from=3-6, to=4-6]
	\arrow[from=4-6, to=5-6]
	\arrow[from=5-6, to=6-6]
	\arrow[from=6-6, to=7-6]
	\arrow["{\bar{\varepsilon}}"', from=5-6, to=6-5]
	\arrow["\varepsilon"', dashed, from=3-4, to=4-3]
\end{tikzcd}}\]

Remark that the first vertical exact sequence of the diagram comes from the long exact sequence in cohomology arising from the short exact sequence of $G_K$-modules: \[0 \to E[\ell] \to \overline{E} \xrightarrow{ \ell} \overline{E} \to 0\] and similarly for the second vertical exact sequence. Following the diagram above, we take $(\alpha,\beta) \in \left( L^\times /(L^\times)^\ell\right)^2$ in the image of the restriction map and apply $\Cor \circ \overline\varepsilon$ to it. Specifically, for any element $(\a,\b) \in M_{G,P,Q}$, we use \Cref{rosset-tate} to compute the corestriction of $\overline{\varepsilon}(\a, \b) = (\a, t_P)_{L(E)} \otimes (\b, t_Q)_{L(E)}$. We use this to give bounds on the symbol length in \Cref{thm:half_bound} and \Cref{thm:subfield-bound}.

\begin{remark}
    When we have $\ell\mid[L:K],$ one can use the restriction-inflation sequence to define $\bar{\varepsilon}$ over $K'$ with $[L:K']=\ell$, and then use the previous construction to pass from the map $\bar{\varepsilon}$ over $K'$ to the $\varepsilon$ over $K$ \cite[Theorems 5.2-5.15]{ure2019prime}. However, our setup no longer applies since the extension $K'$ over $K$ is not necessarily Galois. Hence we restrict ourselves to the case $\ell \nmid [L:K]$.
\end{remark}

\section{Image of the Restriction Map to $H^1( L, E[\ell])$}
\label{sec:image-norm} 

Our first objective is to understand the image of the restriction $H^1(G_K, E[\ell]) \to H^1(G_L,E[\ell])$. As mentioned earlier, we will use the fact that if we pick a basis $P,Q$ of $E[\ell]$, we may apply isomorphisms from Kummer theory to understand the image as pairs of elements $(\a,\b) \in L^{\times}/(L^{\times})^\ell \times L^{\times}/(L^{\times})^\ell$. To prove our first bounds on the symbol length, we require the following lemma.

\begin{lemma}\label{lin-alg}
    Let $A \leqslant  \SL_2(\overline{\F}_\ell)$, with $\ell \neq 2$ and $1 \neq \abs{A} <\infty$. Then $$\sum_{a \in A}a = 0,$$ where the sum is an element of the matrix ring $M_2(\overline{\F}_\ell)$.
\end{lemma}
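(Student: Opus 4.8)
The plan is to exploit the fact that the sum $S := \sum_{a \in A} a$ is invariant under left multiplication by any element of $A$. Indeed, for $b \in A$ the map $a \mapsto ba$ permutes $A$, so $bS = \sum_{a\in A} ba = S$. Equivalently, $(b - I)S = 0$ for every $b \in A$, which says that every vector in the image (column space) of $S$ is fixed by every element of $A$. I would then argue by cases on the rank of $S$, the goal being to rule out $S \neq 0$.

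If $S$ is invertible, then $bS = S$ forces $b = I$ for all $b \in A$, contradicting $\abs{A} > 1$. So $S$ has rank $0$ or $1$; I want to exclude rank $1$. Suppose $\operatorname{rank} S = 1$, so the image of $S$ is a line $V \subseteq \overline{\F}_\ell^2$ that is fixed pointwise by every $b \in A$. Choosing a basis whose first vector spans $V$, each $b \in A$ becomes upper triangular with first column $(1,0)^{\tran}$; since $\det b = 1$, its two diagonal entries are both $1$, so $b = \begin{pmatrix} 1 & c_b \\ 0 & 1 \end{pmatrix}$ for some $c_b \in \overline{\F}_\ell$. Thus in this basis $A$ is a subgroup of the unipotent group, isomorphic via $b \mapsto c_b$ to a finite subgroup of the additive group $(\overline{\F}_\ell, +)$.

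To finish the rank-$1$ case I would compute $S$ directly in this basis. Its diagonal entries equal $\abs{A}$; since a finite subgroup of $(\overline{\F}_\ell,+)$ is an elementary abelian $\ell$-group, $\abs{A} = \ell^k$ for some $k \geq 1$, hence $\abs{A} = 0$ in $\overline{\F}_\ell$. Its off-diagonal entry equals $\sum_{b \in A} c_b$, the sum over the subgroup $\{c_b\}$ of $(\overline{\F}_\ell, +)$; because $\ell$ is odd the involution $c \mapsto -c$ has only $0$ as a fixed point, so pairing each term with its negative gives $2\sum_b c_b = \sum_b c_b + \sum_b (-c_b) = 0$ and therefore $\sum_b c_b = 0$. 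Hence $S = 0$, contradicting $\operatorname{rank} S = 1$, and we conclude $S = 0$.

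The routine parts are the permutation identity and the two sum computations; the only real content is the rank-$1$ analysis, and this is exactly where the hypothesis $\ell \neq 2$ enters decisively (for $\ell = 2$ a single nontrivial transvection already gives $S = \begin{pmatrix} 0 & 1 \\ 0 & 0 \end{pmatrix} \neq 0$). I expect the main obstacle to be cleanly extracting the common fixed line and the unipotent normal form from the relations $(b-I)S = 0$; everything after that is a short direct calculation.
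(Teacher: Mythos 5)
Your proof is correct, and it takes a genuinely different route from the paper's. The paper reduces to a cyclic subgroup $\langle\sigma\rangle$ of prime order $p$ by splitting the sum over cosets, conjugates $\sigma$ into Jordan normal form, and computes $\sum_{0\le n<p}\sigma^n$ explicitly in the two resulting cases: the semisimple case, where $\det\sigma=1$ forces both eigenvalues to be nontrivial $p$-th roots of unity so the geometric series vanishes, and the unipotent case, which forces $p=\ell$ and where the relevant entries are multiples of $\ell$ because $\ell$ is odd. You instead argue globally with $S=\sum_{a\in A}a$: the invariance $bS=S$ disposes of rank $2$ immediately, and in the rank-$1$ case the common fixed line puts all of $A$ simultaneously into unipotent upper-triangular form (this is where $\det b=1$ enters), so $A$ embeds into $(\overline{\F}_\ell,+)$ and is elementary abelian of order $\ell^k$; then the diagonal entries of $S$ equal $\abs{A}=0$ and the off-diagonal entry is a sum over a finite additive subgroup, killed by the pairing $c\mapsto -c$ since $\ell\neq 2$. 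Your rank-$1$ analysis is in effect a coordinate-free, all-of-$A$-at-once version of the paper's unipotent Jordan-block computation; it buys a uniform argument with no reduction to prime-order subgroups and no appeal to Jordan form (so no real use of algebraic closedness beyond the statement itself), while the paper's version is more computational but entirely explicit. Both proofs invoke the hypotheses at the same two points, and your closing observation that a single transvection already violates the conclusion when $\ell=2$ correctly pinpoints why that hypothesis is indispensable.
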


\begin{proof}
We first prove this in the case where $A$ is a cyclic group of prime order $p$.

Let $\sigma$ be a generator of $A$. Assume first that the generator $\sigma$ of $A$ is in Jordan normal form. We have two cases: \[\sigma_1 = \begin{pmatrix} \lambda_1 & 0 \\ 0 & \lambda_2 \end{pmatrix} \qquad \text{ or } \qquad \sigma_2 = \begin{pmatrix} \lambda & 1\\ 0 & \lambda \end{pmatrix},\]
where $\lambda_1^p = \lambda_2^p = \lambda^p = 1$. Furthermore, the latter case occurs only when $p=\ell$, since  \[\begin{pmatrix} 1 & 0\\ 0 & 1 \end{pmatrix}=\sigma_2^p = \begin{pmatrix} \lambda^p & p\lambda^{p-1}\\ 0 & \lambda^p \end{pmatrix}\]

In the first case, we must have $\l_1, \l_2\neq1$ since $\det \sigma = 1$ as $\sigma \in \SL_2(\overline{\F}_\ell)$. Hence,  \[\sum_{0 \leq n < p}\sigma_1^n = \begin{pmatrix} 1 + \lambda_1 + \dots + \lambda_1^{p-1} & 0 \\ 0 &  1 + \lambda_2 + \dots + \lambda_2^{p-1} \end{pmatrix} = \begin{pmatrix} \frac{\lambda_1^p - 1}{\lambda_1 - 1} & 0 \\ 0& \frac{\lambda_2^p - 1}{\lambda_2 - 1}\end{pmatrix} = 0.\]

In the second case, we observe the polynomial identity $x^\ell-1=(x-1)^\ell$ in $\overline{\F}_\ell,$ implying that the unique (multiple) root of this polynomial is $\l=1.$ Then a direct computation shows that  
\[
\sum_{0\leq n<\ell}\s_2^n=\begin{pmatrix}\ell&\nicefrac{\ell(\ell+1)}{2}\\0&\ell\end{pmatrix}=0.
\]

If $\sigma$ is not in Jordan form, we know that $g\sigma g^{-1}$ is in Jordan form for some $g \in \GL_2(\overline{\mathbb{F}}_\ell)$.
The theorem holds in this case by the following calculation: {\small \[\sum_{0 \leq n < p}\sigma^n = g^{-1}\left(\sum_{0 \leq n < p} g\sigma^n g^{-1} \right)g= g^{-1}\left(\sum_{0 \leq n < p} \left(g\sigma g^{-1}\right)^n \right)g= 0.\]}

Finally, for $A$ any nontrivial finite subgroup of $\SL_2(\overline{\F}_\ell)$, we note that $\abs{A}$ is divisible by some prime $p$, so it contains a cyclic subgroup $\langle\sigma\rangle$ of order $p$. Denoting by $S$ a set of representatives of left cosets of $\langle\sigma\rangle$, we have {\small \[\sum_{a \in A}a = \sum_{s \in S}\sum_{0 \leq n < p}s\sigma^n = \sum_{s \in S}s\left(\sum_{0 \leq n < p}\sigma^n \right)= 0. \qedhere \]}
\end{proof}
Using \Cref{lin-alg} we can determine the image of the restriction map $M_{G,P,Q}$ defined in \Cref{def:funny-norm}. Recall that the algorithm in \cite{ure2019prime} implies a bound on the symbol length of $2[L:K]$ if $\ell \nmid [L:K]$ and $\frac{2}{\ell}[L:K]$ if $\ell \mid [L:K]$. Combining the following result carefully with the algorithm, we may reduce these bounds significantly.  
 
\begin{proposition}\label{lem:im_norm_1}
    Let $L$ over $K$ be a finite Galois extension with $G= \Gal(L/K)$ and suppose $\ell \neq 2$, $\abs{G} \neq 1$, and $G \leq \SL_2(\F_\ell)$. Then for any $K' \subsetneq L$ we have that $(\alpha, \beta) \in M_{G, P, Q}$ satisfies \[\Norm_{L/K'}(\a)=\Norm_{L/K'}(\b)=1,\] as elements of $(K')^\times/((K')^\times)^\ell$.
\end{proposition}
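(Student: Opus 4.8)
The plan is to write a chosen representative of $(\alpha,\beta)=\mathcal N_{P,Q}(a,b)$ explicitly, push the norm $\Norm_{L/K'}$ through the product that defines it, and show that every exponent that appears becomes divisible by $\ell$ by invoking \Cref{lin-alg} for the subgroup $H:=\Gal(L/K')$. Since $G\le\SL_2(\F_\ell)$, I treat each $\gamma\in G$ simultaneously as a field automorphism (acting on $a,b\in L^\times$) and as a matrix $\gamma=\left(\begin{smallmatrix}A_\gamma & B_\gamma\\ C_\gamma & D_\gamma\end{smallmatrix}\right)$. Reindexing the product over $G$ in the action formula recorded before \Cref{def:funny-norm} by $\gamma=g^{-1}$, I obtain representatives
\[\alpha=\prod_{\gamma\in G}(\gamma a)^{A_\gamma}(\gamma b)^{B_\gamma},\qquad \beta=\prod_{\gamma\in G}(\gamma a)^{C_\gamma}(\gamma b)^{D_\gamma}\]
in $L^\times$, where $A_\gamma,\dots,D_\gamma$ are integer lifts of the entries of $\gamma$.

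Next I would compute the norm. Because $K\subseteq K'\subsetneq L$, the group $H=\Gal(L/K')$ is a \emph{nontrivial} subgroup of $G$, and $\Norm_{L/K'}(\alpha)=\prod_{h\in H}h(\alpha)$. Using $h(\gamma a)=(h\gamma)(a)$ and reindexing each inner product by $\delta=h\gamma$, this collects into
\[\Norm_{L/K'}(\alpha)=\prod_{\delta\in G}(\delta a)^{\,\sum_{h\in H}A_{h^{-1}\delta}}\,(\delta b)^{\,\sum_{h\in H}B_{h^{-1}\delta}},\]
where $A_{h^{-1}\delta},B_{h^{-1}\delta}$ are the top-row entries of the matrix $h^{-1}\delta$. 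Modulo $\ell$, the exponent of $\delta a$ equals the $(1,1)$-entry of $\sum_{h\in H}h^{-1}\delta=\big(\sum_{h\in H}h\big)\delta$ (after the bijection $h\mapsto h^{-1}$ of $H$). Since $H\le\SL_2(\F_\ell)$ is finite, nontrivial, and $\ell\ne 2$, \Cref{lin-alg} gives $\sum_{h\in H}h=0$, so this matrix product vanishes and every exponent above is $\equiv 0\pmod\ell$. The identical computation applies to the $\delta b$ exponents and to $\beta$. Hence $\Norm_{L/K'}(\alpha)$ and $\Norm_{L/K'}(\beta)$ both lie in $(L^\times)^\ell$.

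Finally I would descend from $L$ to $K'$, and I expect this to be the only genuine subtlety: the previous step only exhibits the norms as $\ell$-th powers \emph{in} $L^\times$, whereas the claim is their triviality in $(K')^\times/((K')^\times)^\ell$. This is where $\ell\nmid[L:K]$ is used, since $[L:K']$ divides $[L:K]$ and is therefore coprime to $\ell$. Concretely, the Kummer restriction $(K')^\times/((K')^\times)^\ell\to L^\times/(L^\times)^\ell$ is injective when $\ell\nmid[L:K']$ (equivalently, by B\'ezout: if $c\in(K')^\times$ satisfies $c=d^\ell$ with $d\in L^\times$, then $c^{[L:K']}=\Norm_{L/K'}(d)^\ell$ combined with $c^\ell$ writes $c$ as an $\ell$-th power in $K'$). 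As $\Norm_{L/K'}(\alpha),\Norm_{L/K'}(\beta)\in(K')^\times\cap(L^\times)^\ell$, they are trivial in $(K')^\times/((K')^\times)^\ell$, which is the assertion.
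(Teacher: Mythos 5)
Your proof is correct, and it rests on the same key input as the paper's, namely \Cref{lin-alg} applied to the nontrivial subgroup $H=\Gal(L/K')$; but you organize the computation differently, and the difference is worth noting. The paper first splits the product over $G$ along right cosets of $H$, rewriting $\mathcal{N}_{G,P,Q}(a,b)=\mathcal{N}_{H,P,Q}(a',b')$ with $(a',b')=\prod_{s\in S}s\cdot(a,b)$; applying $\Norm_{L/K'}$ to \emph{that} expression and using $H$-invariance of the norm collapses everything to $\left(\Norm_{L/K'}(a')\right)^{\sum_{h}c_1^h}\left(\Norm_{L/K'}(b')\right)^{\sum_{h}c_3^h}$, which is visibly an $\ell$-th power of an element of $(K')^\times$ once \Cref{lin-alg} kills the exponents modulo $\ell$ --- no descent is needed. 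You instead take the norm of the raw product over $G$, which (correctly) exhibits $\Norm_{L/K'}(\alpha)$ only as an $\ell$-th power in $L^\times$, forcing the extra step via injectivity of $(K')^\times/((K')^\times)^\ell\to L^\times/(L^\times)^\ell$. Your B\'ezout argument for that injectivity is fine, but it imports the hypothesis $\ell\nmid[L:K']$, which is a standing assumption of the paper rather than part of the proposition's own statement; the coset rearrangement proves the proposition without it, and it is the same reduction-to-$\mathcal{N}_H$ device the paper reuses in the proof of \Cref{prop:asa}, so it is worth internalizing.
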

\begin{proof}
Choose $(a,b) \in  L^\times/\left( L^\times\right)^\ell \times L^\times/\left( L^\times\right)^\ell$ such that $\mathcal{N}_{G,P,Q}(a,b) =(\alpha,\beta)$. We write $H$ for $\text{Gal}(L/K')$ and $S$ for a set of representatives of right cosets of $H$ in $G$. Observe that {\small \[\mathcal{N}_{G,P,Q}(a,b) = \prod_{g \in G}g\cdot(a,b) = \prod_{h \in H}h \cdot\left( \prod_{s \in S}s \cdot(a,b)\right) = \prod_{h \in H}h \cdot(a',b') = \mathcal{N}_{H,P,Q}(a',b'),\]}
where $(a',b') = \prod_{s \in S} s\cdot(a,b) \in L^\times /(L^\times)^\ell \times L^\times / (L^\times)^\ell$. Fix a basis $P, Q$ of $E[\ell]$, and let $h \in H$ with $h^{-1} = \begin{pmatrix}c_1^h & c_3^h \\ c_2^h & c_4^h \end{pmatrix}$. We have
\begin{align*}
    \mathcal{N}_{H, P, Q}(a, b) &= \left(\prod_{h \in H} (h^{-1}(a))^{c_1^h} (h^{-1}(b))^{c_3^h}, \prod_{h \in H}(h^{-1}(a))^{c_2^h} (h^{-1}(b))^{c_4^h} \right).
\end{align*}
Since the field norm $\Norm_{L/K'}$ commutes with products and is invariant under $H$-action, we see that
\begin{align*}
    \Norm_{L/K'}(\alpha) &= \left(\Norm_{L/K'}(a')\right)^{\sum_{h \in H} c_1^h} \cdot \left(\Norm_{L/K'}(b')\right)^{\sum_{h \in H} c_3^h} \\ \Norm_{L/K'}(\beta) &= \left(\Norm_{L/K'}(a')\right)^{\sum_{h \in H} c_2^h} \cdot \left(\Norm_{L/K'}(b')\right)^{\sum_{h \in H} c_4^h}.
\end{align*}
Since $H \leq  \SL_2(\F_\ell)$, $\ell \neq 2$, and $1 \neq \abs{H} <\infty$, by \Cref{lin-alg} we have $\sum_{h \in H} c_i^h = 0$. The statement of the proposition follows immediately.
\end{proof}

\begin{cor}
If $\abs{G} = \abs{\langle\sigma\rangle} = 2$, then $M_{G,P,Q}$ is exactly equal to the elements $(\a,\b)$ of $L^{\times}/(L^{\times})^\ell \times L^{\times}/(L^{\times})^\ell$ such that $\Norm_{L/K}(\a)$ and $\Norm_{L/K}(\b)$ are 1 modulo $\ell^{th}$ powers. This is because \[\mathcal{N}_{G,P,Q}(a,b) = \left(\frac{a}{\sigma(a)}, \frac{b}{\sigma(b)}\right), \] and by Hilbert's Theorem 90 \cite{hilbert}, all norm 1 elements are of this form.
\end{cor}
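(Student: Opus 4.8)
The plan is to first pin down the single nontrivial element $\sigma \in G$ explicitly, then reduce the asserted set equality to a coordinatewise statement about the operator $\bar a \mapsto \overline{a/\sigma(a)}$ on $L^\times/(L^\times)^\ell$, and finally prove the two inclusions, the hard one by correcting a ``norm is an $\ell$th power'' element to a genuine norm-one element so that classical Hilbert 90 applies.

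By \Cref{imagegroup} we have $G \le \SL_2(\F_\ell)$, so $\sigma$ is an order-two matrix of determinant $1$. Its minimal polynomial divides $x^2-1 = (x-1)(x+1)$, which is squarefree since $\ell \neq 2$, so $\sigma$ is diagonalizable with eigenvalues in $\{\pm 1\}$; as $\sigma \neq I$ and $\det\sigma = 1$, both eigenvalues equal $-1$ and hence $\sigma = -I$. Then $\sigma^{-1}$, as a matrix, is $\begin{pmatrix} -1 & 0 \\ 0 & -1\end{pmatrix}$, and (noting $\sigma^{-1}=\sigma$ as a field automorphism) plugging $c_1^\sigma = c_4^\sigma = -1$, $c_2^\sigma = c_3^\sigma = 0$ into the action formula preceding \Cref{def:funny-norm} gives $\sigma\cdot(a,b) = (\sigma(a)^{-1}, \sigma(b)^{-1})$. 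Multiplying by the identity term yields
\[\mathcal{N}_{G,P,Q}(a,b) = (a,b)\cdot(\sigma\cdot(a,b)) = \left(\tfrac{a}{\sigma(a)}, \tfrac{b}{\sigma(b)}\right),\]
as claimed. Because this is computed coordinatewise, it suffices to prove the single-variable identity
\[\left\{\,\overline{a/\sigma(a)} : a \in L^\times\,\right\} = \left\{\,\bar\alpha \in L^\times/(L^\times)^\ell : \Norm_{L/K}(\alpha) \in (K^\times)^\ell\,\right\}.\]

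For the inclusion $\subseteq$ one computes $\Norm_{L/K}(a/\sigma(a)) = \Norm_{L/K}(a)/\Norm_{L/K}(\sigma(a)) = 1$, since the norm is $G$-invariant; this is exactly \Cref{lem:im_norm_1} with $K' = K$. The reverse inclusion is the crux, and it is where I expect the one genuine obstacle. Given $\alpha$ with $\Norm_{L/K}(\alpha) = c^\ell$ for some $c \in K^\times$, I would like to replace $\alpha$ by $\gamma := \alpha w^\ell$ so that $\Norm_{L/K}(\gamma)=1$ and then invoke Hilbert 90; this requires $w \in L^\times$ with $\Norm_{L/K}(w) = c^{-1}$, i.e. $c \in \Norm_{L/K}(L^\times)$. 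The difficulty is passing from ``$c^\ell$ is a norm'' to ``$c$ is a norm,'' since the norm map is far from surjective. It is resolved by coprimality: for every $x \in K^\times$ one has $\Norm_{L/K}(x) = x^{[L:K]} = x^2$ (the $\Cor\circ\Res = $ multiplication-by-$[L:K]$ statement of \Cref{prop:cor-is-norm}), so $K^\times/\Norm_{L/K}(L^\times)$ is annihilated by $2$; together with $c^\ell \in \Norm_{L/K}(L^\times)$ and $\gcd(2,\ell)=1$, the class of $c$ is trivial, whence $c \in \Norm_{L/K}(L^\times)$.

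With such a $w$ in hand, $\gamma = \alpha w^\ell$ satisfies $\Norm_{L/K}(\gamma) = c^\ell\,\Norm_{L/K}(w)^\ell = 1$, so Hilbert's Theorem 90 for the cyclic extension $L/K$ produces $a \in L^\times$ with $\gamma = a/\sigma(a)$; reducing modulo $(L^\times)^\ell$ gives $\bar\alpha = \overline{a/\sigma(a)}$, completing the reverse inclusion and hence the proof. Alternatively, one can bypass the explicit correction by the cleaner observation that $\hat H^{-1}(G, L^\times/(L^\times)^\ell)$ is killed by both $|G|=2$ and $\ell$ and therefore vanishes, which directly identifies the kernel of the induced norm with the image of $\sigma-1$; the only additional point needed there is that $(L^\times)^\ell \cap K^\times = (K^\times)^\ell$, which again follows from $\gcd(2,\ell)=1$.
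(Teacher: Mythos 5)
Your proof is correct and follows essentially the same route as the paper: identify $\sigma$ with $-I$ (the unique order-two element of $\SL_2(\F_\ell)$ for odd $\ell$), read off $\mathcal{N}_{G,P,Q}(a,b)=\bigl(\tfrac{a}{\sigma(a)},\tfrac{b}{\sigma(b)}\bigr)$ from the action formula, and combine the norm computation with Hilbert's Theorem 90. The one place you go beyond the paper's one-line justification is worthwhile: the hypothesis is only that $\Norm_{L/K}(\alpha)$ is trivial modulo $\ell$th powers, not that it equals $1$, and your coprimality argument (using $\Norm_{L/K}(x)=x^{2}$ on $K^\times$ and $\gcd(2,\ell)=1$ to adjust $\alpha$ by $w^\ell$ to a genuine norm-one element) is exactly the step needed to make the appeal to classical Hilbert 90 airtight; the Tate-cohomology variant you sketch, with the observation that $(L^\times)^\ell\cap K^\times=(K^\times)^\ell$, is an equally valid packaging of the same point.
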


The fact that the image of the restriction map on any subfield will always give us elements of field norm 1 implies the following observation on how the symbol length changes upon applying the corestriction map.

\begin{lemma}\label{naive-bound-lemma}
    Suppose $\ell \nmid [L:K']$ and $(s,t) \in \Br(L(E))$ with $\Norm_{L(E)/K'(E)}{s} = 1$ modulo $\ell^{\text{th}}$ powers. Then the length of the symbol \[\Cor_{L(E)/K'(E)}(s,t)_{\ell, L(E)}\] is at most $[L:K']-1$. 

\end{lemma}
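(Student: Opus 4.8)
The plan is to invoke the Rosset--Tate algorithm (\Cref{rosset-tate}) with $F' = L(E)$ and $F = K'(E)$, applied to the symbol $(s,t)_{\ell, L(E)}$, so that $x = s$ and $y = t$. Since $E$ is defined over $K \subseteq K'$, the extension $L(E)/K'(E)$ has degree $[L:K']$, and the algorithm expresses the corestriction as
\[\Cor_{L(E)/K'(E)}(s,t)_{\ell, L(E)} = -\sum_{i=1}^m \left(c(g_{i-1}^*), c(g_i)\right)_{\ell, K'(E)},\]
with $m \le \deg g_0 \le [L:K']$. This already yields the naive bound $[L:K']$; the goal is to show that the $i=1$ term contributes trivially, so that only $m-1 \le [L:K']-1$ symbols remain.

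The first term involves $c(g_0^*)$, where $g_0 = g$ is the minimal polynomial of $s$ over $K'(E)$. First I would record that $s \neq 0$, so $g_0$ is monic with nonzero constant term $a_0$; a direct computation from the definitions of $(-)^*$ and $c(-)$ gives $c(g_0^*) = (-1)^{n_0} a_0^{-1}$, where $n_0 = \deg g_0 = [K'(E)(s):K'(E)]$. Since the constant term of a monic polynomial is $(-1)^{n_0}$ times the product of its roots, we have $a_0 = (-1)^{n_0}\Norm_{K'(E)(s)/K'(E)}(s)$, and hence
\[c(g_0^*) = \Norm_{K'(E)(s)/K'(E)}(s)^{-1}.\]
The main work is then to show this element is an $\ell$-th power in $K'(E)^\times$, for in that case $(c(g_0^*), c(g_1))_{\ell, K'(E)} = 0$ in the $\ell$-torsion Brauer group (one argument being an $\ell$-th power) and drops out of the sum.

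To see that this partial norm is trivial modulo $\ell$-th powers, I would pass from it to the full norm via the tower law $\Norm_{L(E)/K'(E)}(s) = \Norm_{K'(E)(s)/K'(E)}(s)^{[L(E):K'(E)(s)]}$, using $\Norm_{L(E)/K'(E)(s)}(s) = s^{[L(E):K'(E)(s)]}$. By hypothesis the left-hand side is $1$ modulo $\ell$-th powers, and the exponent $[L(E):K'(E)(s)] = [L:K']/n_0$ divides $[L:K']$, hence is coprime to $\ell$. Since raising to a power coprime to $\ell$ is an automorphism of $K'(E)^\times/(K'(E)^\times)^\ell$, it follows that $\Norm_{K'(E)(s)/K'(E)}(s) \equiv 1$, so $c(g_0^*)$ is an $\ell$-th power. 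I expect this norm bookkeeping---keeping straight the partial norm down to $K'(E)(s)$ versus the full norm down to $K'(E)$, and justifying the coprimality cancellation modulo $\ell$-th powers---to be the only real obstacle. Once it is in place the first symbol vanishes, the sum has at most $m - 1 \le [L:K']-1$ terms, and the claimed bound follows (the degenerate case $m = 0$, where the corestriction is already trivial, being immediate).
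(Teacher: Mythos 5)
Your proposal is correct and follows essentially the same route as the paper: apply \Cref{rosset-tate}, observe that $c(g_0^*)$ is the inverse of $\Norm_{K'(E)(s)/K'(E)}(s)$, which is an $\ell$-th power, so the first symbol in the Rosset--Tate sum is trivial and at most $m-1 \le [L:K']-1$ symbols remain. The only difference is that you spell out the tower-law and coprimality argument showing the partial norm is an $\ell$-th power, whereas the paper simply writes the constant term of the minimal polynomial as $(-1)^d e_0^\ell$ and leaves that step implicit.
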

\begin{proof}
    We apply \Cref{rosset-tate}. Let $p_s := x^d + e_{d-1}x^{d-1} + \dots + e_1x + (-1)^de_0^\ell$ denote the minimum polynomial of $s$ over $K'(E)$.
    Let $g_0, g_1, \dots, g_m$ be the sequence of polynomials as in \Cref{rosset-tate} with $g_0 = p_s$. Then
  {\small  \begin{align*}
        \Cor_{L(E)/K'(E)}(s,t)_{\ell,L(E)} &= -\sum_{i=1}^{m} \left(c(g_{i-1}^{*}), c(g_i) \right)_{\ell,K'(E)} \\
        &= -((-1)^{2d}\cdot e_0^{-\ell}, c(g_1))_{\ell,K'(E)} \otimes -\sum_{i=2}^{m} \left(c(g_{i-1}^{*}), c(g_i) \right)_{\ell,K'(E)} \\
        &= -\sum_{i=2}^{m} \left(c(g_{i-1}^{*}), c(g_i) \right)_{\ell,K'(E)}.
    \end{align*}}
    \Cref{rosset-tate} implies that $m \leq \deg(g_0) \leq [L:K']$, so the length of the symbol \[\Cor_{L(E)/K'(E)}(\a, \b)_{L(E)}\] is at most $m-1 \le [L:K']-1$.
\end{proof}
We can apply this lemma to obtain our first upper bound on the symbol length of $\mathbin{_\ell \text{Br}(E)}/\mathbin{_\ell \text{Br}(K)}$.

\begin{cor}\label{cor:naive-bound}
    Suppose that $K \neq L$, that $\ell \nmid [L:K]$, and $(\a,\b) \in M_{G, P, Q}.$ Then the symbol length of  \[\Cor_{L(E)/K(E)}(\a,t_P)_{L(E)} \otimes \Cor_{L(E)/K(E)}(\b, t_Q)_{L(E)}\] and hence of $\mathbin{_\ell \text{Br}(E)}/\mathbin{_\ell \text{Br}(K)}$ is at most $2[L:K]-2$.
\end{cor}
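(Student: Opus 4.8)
The plan is to bound the two corestricted symbols $\Cor_{L(E)/K(E)}(\a,t_P)_{L(E)}$ and $\Cor_{L(E)/K(E)}(\b,t_Q)_{L(E)}$ separately, each by $[L:K]-1$, and then add. To invoke \Cref{naive-bound-lemma} for the first symbol I need the function-field norm $\Norm_{L(E)/K(E)}(\a)$ to be trivial modulo $\ell$th powers, so the first task is to produce this from the hypothesis $(\a,\b)\in M_{G,P,Q}$. For this I would apply \Cref{lem:im_norm_1} with $K'=K$: its hypotheses hold here, since $G\le\SL_2(\F_\ell)$ by \Cref{imagegroup}, $\ell\neq 2$ by assumption, and $\abs{G}\neq 1$ because $K\neq L$. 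This yields $\Norm_{L/K}(\a)=\Norm_{L/K}(\b)=1$ in $K^\times/(K^\times)^\ell$.

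Next I would transfer this field-norm statement to the function field. Because $E$ is defined over $K$ and $L/K$ is finite Galois with $K$ algebraically closed in $K(E)$, the fields $L$ and $K(E)$ are linearly disjoint over $K$, so $L(E)=LK(E)$ and restriction gives an isomorphism $\Gal(L(E)/K(E))\xrightarrow{\sim}\Gal(L/K)=G$. For the constant $\a\in L^\times\subset L(E)^\times$ this identifies the two norms:
\[
\Norm_{L(E)/K(E)}(\a)=\prod_{\s\in\Gal(L(E)/K(E))}\s(\a)=\prod_{g\in G}g(\a)=\Norm_{L/K}(\a),
\]
which is therefore already an $\ell$th power in $K^\times\subseteq K(E)^\times$. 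The same holds for $\b$.

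Now \Cref{naive-bound-lemma} applies to $(s,t)=(\a,t_P)$ and to $(s,t)=(\b,t_Q)$ with $K'=K$ (using $\ell\nmid[L:K]$), bounding the symbol length of each corestriction by $[L:K]-1$; tensoring gives the bound $2[L:K]-2$ on the displayed element. For the final clause, recall $\mathbin{_\ell\Br(E)}=\mathbin{_\ell\Br(K)}\oplus\Im(\varepsilon)$ together with the defining relation $[L:K]\,\varepsilon=\Cor\circ\bar\varepsilon\circ\Res$. Every class of $\mathbin{_\ell\Br(E)}/\mathbin{_\ell\Br(K)}$ is $\varepsilon(\gamma)$ for some $\gamma\in H^1(G_K,E[\ell])$, and writing $(\a,\b)=\Res(\gamma)\in M_{G,P,Q}$, its multiple $[L:K]\,\varepsilon(\gamma)=\Cor(\a,t_P)_{L(E)}\otimes\Cor(\b,t_Q)_{L(E)}$ is exactly of the form just bounded. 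Since $\ell\nmid[L:K]$, multiplication by $[L:K]$ is invertible on the $\ell$-torsion, so $\varepsilon(\gamma)$ is an integer multiple of $[L:K]\,\varepsilon(\gamma)$; absorbing that integer into one argument of each symbol leaves the number of symbols unchanged, yielding the bound $2[L:K]-2$ for every class.

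I expect the only genuine subtlety to be the norm comparison in the second step: confirming that $K$ is algebraically closed in $K(E)$, equivalently that $L$ and $K(E)$ are linearly disjoint over $K$, so that $\Gal(L(E)/K(E))\cong G$ and $\Norm_{L(E)/K(E)}$ restricts to $\Norm_{L/K}$ on constants. Everything else is bookkeeping: verifying the hypotheses of \Cref{lem:im_norm_1} and \Cref{naive-bound-lemma}, and noting that rescaling a tensor of symbols by a unit modulo $\ell$ does not change the number of symbols.
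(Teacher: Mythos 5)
Your proposal is correct and follows the same route as the paper: apply \Cref{lem:im_norm_1} with $K'=K$ (valid since $K\neq L$ forces $\abs{G}\neq 1$) to get norm-one entries, then bound each corestricted factor by $[L:K]-1$ via \Cref{naive-bound-lemma} and tensor. The two points you elaborate --- identifying $\Norm_{L(E)/K(E)}$ with $\Norm_{L/K}$ on constants via linear disjointness, and inverting multiplication by $[L:K]$ on $\ell$-torsion to pass from the displayed elements to all of $\mathbin{_\ell \text{Br}(E)}/\mathbin{_\ell \text{Br}(K)}$ --- are left implicit in the paper but are correctly handled.
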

\begin{proof}
    Suppose $\ell \nmid [L:K]$ and $(\a,\b) \in M_{G, P, Q}$. Since $K \neq L$, we may apply \Cref{lem:im_norm_1} to see that $\text{Norm}_{L/K}(\a) = \text{Norm}_{L/K}(\b) = 1$ modulo $\ell$-th powers. By \Cref{naive-bound-lemma}, the symbol length of \[\Cor_{L(E)/K(E)}(a,t_P)_{L(E)}\]
    is at most $[L:K] - 1$ and similarly for 
    $\Cor_{L(E)/K(E)}(b, t_Q)_{L(E)}$.
    Hence the symbol length of their tensor product is at most $2[L:K]-2$.
\end{proof}

\section{Bounds on the Symbol Length}\label{sec:bounds} 
In the previous section, we used \Cref{rosset-tate} to obtain a bound on the symbol length of corestriction. Our next step is to choose judiciously a basis $P, Q$ of $E[\ell]$ in order to significantly improve the bound on the symbol length of $\mathbin{_{\ell}\Br(E)}/\mathbin{_{\ell}\Br(K)}$.

\begin{proposition}\label{prop:asa} 
If $G \leq \SL_2(\F_\ell)$ and there exists $\sigma \in G$ with order $d > 2$, then there exists a choice of basis $P, Q$ such that $Q=\s(P)$ and any element $(\a,\b)\in M_{G,P,Q}$ satisfies $\b=\frac{1}{\s^{-1}(\a)}.$
\end{proposition}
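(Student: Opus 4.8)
The plan is to use the defining property that every element of $M_{G,P,Q}$ is a \emph{norm}: if $(\alpha,\beta)=\mathcal{N}_{P,Q}(a,b)=\prod_{g\in G}g\cdot(a,b)$, then for any $h\in G$ the automorphism $h\cdot(-)$ of $\frac{L^\times}{(L^\times)^\ell}\times\frac{L^\times}{(L^\times)^\ell}$ merely permutes the factors of the product, so $h\cdot(\alpha,\beta)=(\alpha,\beta)$. Thus every element of $M_{G,P,Q}$ is $G$-invariant, and in particular $\sigma$-invariant. The relation $\beta=\tfrac{1}{\sigma^{-1}(\alpha)}$ will then be read off from the explicit action formula, once the basis is arranged so that $\rho(\sigma)$ is a companion matrix. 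Concretely, as soon as $Q=\sigma(P)$, Cayley--Hamilton together with $\det\sigma=1$ gives $\sigma(Q)=\sigma^2(P)=t\,Q-P$ with $t=\operatorname{tr}\sigma$, so that $\rho(\sigma)=\left(\begin{smallmatrix}0&-1\\1&t\end{smallmatrix}\right)$ in the basis $P,Q$.

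First I would produce the basis. I need $P\in E[\ell]$ with $\{P,\sigma(P)\}$ independent and, to respect the standing normalization, with $e(P,\sigma(P))=\zeta_\ell$. Writing coordinates in the fixed Weil-normalized basis, $e(v,\sigma v)=\zeta_\ell^{q(v)}$, where $q(v)=\det[\,v\mid\sigma v\,]$ is a binary quadratic form of discriminant $t^2-4\det\sigma=t^2-4$. The hypotheses $d>2$ and $\ell\nmid[L:K]$ (whence $\ell\nmid d$) exclude $\sigma=\pm I$ and $\sigma=\pm(\text{unipotent})$, so $t\neq\pm2$ and $q$ is nondegenerate; a nondegenerate binary quadratic form over $\F_\ell$ is universal, so some $P$ satisfies $q(P)=1$. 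For this $P$ we automatically have $\det[\,P\mid\sigma P\,]=1\neq0$, so $P$ is not a $\sigma$-eigenvector, $\{P,\sigma(P)\}$ is a basis, and $e(P,\sigma(P))=\zeta_\ell$; set $Q=\sigma(P)$.

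With the basis in companion form we have $\sigma^{-1}=\left(\begin{smallmatrix}t&1\\-1&0\end{smallmatrix}\right)$, and the action formula gives $\sigma\cdot(\alpha,\beta)=\bigl(\sigma^{-1}(\alpha)^{t}\sigma^{-1}(\beta),\ \sigma^{-1}(\alpha)^{-1}\bigr)$. Imposing $\sigma$-invariance and comparing second coordinates yields $\beta=\sigma^{-1}(\alpha)^{-1}=\tfrac{1}{\sigma^{-1}(\alpha)}$, which is the claim; the first coordinate produces only an auxiliary identity that is automatically consistent and is not needed.

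The hard part is the basis construction, not the final computation. Everything rests on $q$ being nondegenerate, and this is precisely where the hypotheses enter: $d>2$ forbids $\sigma=-I$ (for which $\sigma(P)=-P$ can never be independent of $P$), while $\ell\nmid d$ forbids the unipotent cases $t=\pm2$, where $q$ degenerates and may represent only squares. I would therefore organize the write-up so that the quadratic-form/universality step is isolated and stated cleanly, after which the invariance argument closes the proof in essentially one line.
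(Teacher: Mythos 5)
Your proof is correct, and it takes a genuinely different route from the paper's. The paper reduces to the cyclic group $\langle\sigma\rangle$, puts $\sigma$ in rational canonical form, computes $\sigma^k$ explicitly for all $k$, and extracts $\beta=\tfrac{1}{\sigma^{-1}(\alpha)}$ from the resulting product via telescoping identities among powers of $\zeta_d$. You instead exploit the fact that $\mathcal{N}_{P,Q}$ is a norm for the $G$-action (so its image is pointwise $G$-fixed, since $h\cdot\prod_{g}g\cdot(a,b)=\prod_{g}(hg)\cdot(a,b)$), and then read the relation off the second coordinate of $\sigma\cdot(\alpha,\beta)=(\alpha,\beta)$ using only the single matrix $\sigma^{-1}=\left(\begin{smallmatrix}t&1\\-1&0\end{smallmatrix}\right)$; this eliminates both the reduction to cyclic subgroups and all the root-of-unity bookkeeping. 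Your treatment of the basis is also more careful than the paper's: where the paper silently passes to a rational canonical basis, you verify via universality of the nondegenerate binary form $q(v)=\det[\,v\mid\sigma v\,]$ (discriminant $t^2-4\neq 0$, using $d>2$ and the standing hypothesis $\ell\nmid[L:K]$ exactly as the paper does) that one can take $e(P,\sigma(P))=\zeta_\ell$, preserving the Weil-pairing normalization fixed at the start of Section 2. What the paper's computation buys in exchange is the fully explicit formula for $(\alpha,\beta)$ in terms of the input $(a,b)$, which is what the accompanying code actually evaluates; your argument yields the identity $\beta=\tfrac{1}{\sigma^{-1}(\alpha)}$ (plus the auxiliary first-coordinate relation) without producing that closed form. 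Both proofs are valid; yours is the cleaner derivation of the stated proposition.
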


\begin{proof}
    We first reduce to the case that $G$ is cyclic. Assume that the proposition holds for some subgroup $H \leq G$,
    then the same also holds for $G$. If we write $S$ for a set of representatives of right cosets of $H$, then for any $(a,b) \in L^{\times}/(L^{\times})^{\ell} \times L^{\times}/(L^{\times})^{\ell}$, we have 
    \[\mathcal{N}(a,b) = \prod_{g \in G}g \cdot(a,b) = \prod_{h \in H} h \cdot \prod_{s \in S}s \cdot(a,b) = \prod_{h \in H} h \cdot (a', b')  = (\alpha, \beta).\]
    However, this is just $\mathcal{N}_H(a',b')$, so there exists some $\sigma \in H$ such that $\beta = \frac{1}{\s^{-1}(\a)}$.  \\
    
    Suppose from now on that $G$ is cyclic of order $d$. Since $\abs{G}>2$ and there is only one element in $\SL_2(\F_\ell)$ of order equal to $2$, we may assume that $d > 2$. Each of the eigenvalues of $\s$ is a $d$-th root of unity $\z_d$. Since $\s$ has determinant $1,$ its other eigenvalue is $\z_d^{-1}.$ Note that we cannot have $\z_d = \pm1$ since $d \neq 2$. Since $\ell\nmid\abs{G},$ we must have $\s=\pm I.$ In particular, we have $\z_d \neq \z_d^{-1}$ so the invariant factor is $x^2 - (\z_d + \z_d^{-1})x + 1.$ We thus obtain a rational canonical form \[
    \s=\begin{pmatrix}0&-1\\1&\z_d+\z_d^{-1}\end{pmatrix}.
    \]
    A direct computation shows that for $k \geq 2,$ {\small \begin{align*}
    \s^k&= \begin{pmatrix} - \sum\limits_{l=0}^{k-2} \z_d^{k-2-2l} & -\sum\limits_{l=0}^{k-1} \z_d^{k-1-2l} \\  \sum\limits_{l=0}^{k-1} \z_d^{k-1-2l} & \sum\limits_{l=0}^k \z_d^{k-2l} \end{pmatrix}\\
    &=\begin{pmatrix}
    -(\z_d^{k-2}+\z_d^{k-4}+\cdots+\z_d^{4-k}+\z_d^{2-k}) & -(\z_d^{k-1}+\z_d^{k-3}+\cdots+\z_d^{3-k}+\z_d^{1-k}) \\ \z_d^{k-1}+\z_d^{k-3}+\cdots+\z_d^{3-k}+\z_d^{1-k} & \z_d^k+\z_d^{k-2}+\cdots+\z_d^{2-k}+\z_d^{-k}
    \end{pmatrix}.
    \end{align*}}
    By abuse of notation, we will denote by $\zeta_d + \zeta_d^{-1}$ both an element of $\F_\ell$ as well as any lift of this to $\Z$. Thus we may raise any element of $L^\times/(L^\times)^\ell$ to the power of $\zeta_d + \zeta_d^{-1}$ in a well-defined manner. 
    In particular, the image of $(a,b)$ is $(\a, \b)$ with
    {\small \begin{align*} 
        \alpha &= \left(a^{-1} \sigma(b)\sigma^2(a)\sigma^2\left(b^{\zeta_d  + \zeta_d^{-1}}\right) \cdots \sigma^{d-1}\left(a^{\zeta_d^{d-3} + \cdots + \zeta_d^{3-d}}\right)\sigma^{d-1}\left(b^{\zeta_d^{d-2} + \cdots + \zeta_d^{2-d}}\right)
    \right)^{-1}, \\
        \beta &= b \sigma(a) \sigma\left(b^{\zeta_d  +\zeta_d^{-1}}\right) \cdots \sigma^{d-1}\left(a^{\zeta_d^{d-2} + \cdots + \zeta_d^{2-d}}\right)\sigma^{d-1}\left(b^{\zeta_d^{d-1} + \cdots + \zeta_d^{1-d}}\right).
    \end{align*}}
    Noting the identities $\z_d^{d-1}+\cdots+\z_d^{1-d}=0$ (for even $d,$ this is because for every element $a$ in this sum there is $-a,$ and for odd $d$, all exponents occur exactly once) and $\z_d^{d-2}+\cdots+\z_d^{2-d}=-1,$ we see that $\b=\frac{1}{\s^{-1}(\a)}$ as required.
\end{proof}

\begin{remark}
If $G$ contains an element $\sigma$ of order 4, then picking a basis $P,Q$ as in \Cref{prop:asa}, we have $\beta = \sigma(\alpha)$. In this case, the rational canonical form of $\sigma$ is
\[\sigma=\begin{pmatrix}0&-1\\1&0\end{pmatrix}.
\]
    Applying $\mathcal{N}$, we get that an element $(a,b)$ is mapped to \[(\a, \b)=
    \left(\frac{a\s^3(b)}{\s^2(a)\s(b)},\frac{\s(a)b}{\s^3(a)\s^2(b)}\right).\]
    From here, it is clear that $\b = \s(\a)$.
\end{remark}

Using \Cref{prop:asa}, we can now prove our main result. 

\halfbound* 

\begin{proof}
The image of $\varepsilon$ consists of corestrictions of elements of the form $$(\a,t_P)_{\ell, L(E)} \otimes (\b,t_Q)_{\ell, L(E)}$$ for $(\a,\b) \in M_{G,P,Q}$. Fix $\sigma, P, Q$ as in \Cref{prop:asa}. Then by the remarks at the end of Section 2 and \Cref{prop:asa}, the image consists of corestrictions of elements of the form 
\[(\a, t_P)_{\ell, L(E)} \otimes ((\sigma^{-1}(\a))^{-1}, t_{\sigma(P)})_{\ell, L(E)}.\]
By \Cref{rem:cor}, the corestriction is invariant under $G$-action, so that the corestriction of $$((\sigma^{-1}(\a))^{-1}, t_{\sigma(P)})_{\ell, L(E)}$$ is equal to the corestriction of 
\[\sigma^* ((\sigma^{-1}(\a))^{-1}, t_{\sigma(P)})=
\left(\alpha^{-1}, t_{\sigma^2(P)}\right)_{\ell, L(E)} = \left(\alpha,t_{\sigma^2(P)}^{-1} \right)_{\ell, L(E)}.\]
Hence {\small \begin{align*}
\Cor\left( (\a, t_P)_{\ell, L(E)} \otimes \left(\tfrac{1}{\sigma^{-1}(\a)}, t_{\sigma(P)}\right)_{\ell, L(E)} \right)  
&= \Cor(\a, t_P)_{\ell, L(E)} \otimes \Cor \left(\alpha,t_{\sigma^2(P)}^{-1} \right)_{\ell, L(E)}\\
&=\Cor \left(\a, \tfrac{t_P}{\sigma^{2}(t_P)}\right)_{\ell, L(E)}. \end{align*}}
By \Cref{lem:im_norm_1}, $\Norm_{L/K}(\alpha) = 1$ modulo $\ell^{\text{th}}$ powers so by \Cref{naive-bound-lemma}, we see that the corestriction of this symbol is the sum of at most $[L:K] - 1$ symbols.
\end{proof}

When $L/K$ contains an intermediate field extensions $K'$, we can factor the corestriction map through $K'$ to further improve the bound and prove our second main theorem.

\subfieldbound*

\begin{proof}
Let $H$ be an order $d$-subgroup of $G$. We write $K' = L^H$ the fixed field of $H$, so that $[L:K']=d$. We will factor the corestriction map as \[\Cor_{\Br(E_L)/\Br(E)} = \Cor_{\Br(E_{K'})/\Br(E)} \circ \Cor_{\Br(E_L)/\Br(E_{K'})}.\]
Let $\s$ be an element of $G$ of order $>2$. 
By \Cref{prop:asa}, it suffices to bound the symbol length of the corestriction of symbols of the form
\[\left(\a, \frac{t_P}{\sigma^{2}(t_P)}\right)_{\ell, L(E)}=\left(\a, t_{P-\s^2(P)}\right)_{\ell, L(E)}\tag{$\ast$}\]
Again by \Cref{lem:im_norm_1}, $\Norm_{L/K'}(\alpha) = 1$ modulo $\ell^{\text{th}}$ powers, so using \Cref{naive-bound-lemma}, we can bound the symbol length of the corestriction of $(\ast)$ to $\Br(E_{K'})$ by $[L:K']-1 = d-1$. By applying \Cref{rosset-tate}, the corestriction of a symbol in $\Br(E_{K'})$ to $\Br(E)$ is the sum of at most $[K':K]$ symbols, so that the corestriction of $(\ast)$ to $\Br(E)$ is the sum of at most $(d-1)[K':K] = (1-\frac{1}{d})[L:K]$ symbols.
\end{proof}

By \Cref{rem:CM-image}, we can apply \Cref{thm:half_bound} and \Cref{thm:subfield-bound} for any CM elliptic curve over number fields. 

\NCart*

\begin{proof}
Since we assume that $\ell$ is an odd prime, we may ignore the case $[L:K]=2$. In this case, \Cref{cor:naive-bound} gives us an upper bound of 2 on the symbol length.
Recall that the order of the normalizer of a Cartan subgroup is either $2(\ell^2-1)$ or $2(\ell - 1)^2$, depending on whether we have a nonsplit or split Cartan. Since $K/\Q$ is of degree at least $\ell-1$, we therefore see that $G$ is a subgroup of a group of order $2(\ell +1)$ or $2(\ell - 1)$. If it is a strict subgroup of either, then the statement holds by \Cref{thm:half_bound}. Otherwise $G$ must contain an element of order 2, so the bound holds by \Cref{thm:subfield-bound}.
\end{proof}

\section{Examples}\label{sec:examples} 

We provide an example to illustrate \Cref{cor:NCart-bound}. Let $K$ be a number field, and consider the infinite family of CM elliptic curves \[\mathcal{F}: y^2 = x^3 + c, \qquad c \in K.\] Let $E$ be a CM elliptic curve in the infinite family $\mathcal{F}$. We compute the symbols in $\mathbin{_{5}\Br(E)} / \mathbin{_{5}\Br(K)}$.  To see explicit generators of $\mathbin{_{5}\Br(E)} / \mathbin{_{5}\Br(K)}$, as well as the code written to perform these computations, please visit \url{https://github.com/team-brauer/brauer-groups} \footnote{All computations were done using SageMath \cite{sagemath}.}. 
Our example requires the following two propositions. The first is a restatement of Theorem 1.4 (ii) in \cite{lozano2018galois}, and the second is an application of \Cref{thm:half_bound} and \Cref{thm:subfield-bound} to the case $\ell =5$. 

\begin{proposition}\label{prop:alvaro}
    \cite[Theorem 1.4(ii)]{lozano2018galois} Let $E/\Q$ be an elliptic curve with $j(E) = 0$. If $\ell \equiv 2, 5 \mod 9$, then the image of the associated Galois representation $\rho_{E, \ell}$ is contained in the normalizer of a nonsplit Cartan.
\end{proposition}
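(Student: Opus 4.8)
The plan is to deduce this from the theory of complex multiplication, noting at the outset that the statement is a restatement of \cite[Theorem 1.4(ii)]{lozano2018galois}, so the real substance lives in that reference; my proposal is to indicate the argument that underlies it. First I would record that every elliptic curve $E/\Q$ with $j(E)=0$ has complex multiplication by the maximal order $\Z[\zeta_3]$ in the imaginary quadratic field $\Q(\sqrt{-3})$: the extra automorphisms $(x,y)\mapsto(\zeta_3 x, \pm y)$ of a Weierstrass model $y^2=x^3+c$ exhibit $\operatorname{Aut}(E)\cong\mu_6$ and hence this CM explicitly. Since the endomorphism ring acts on $E[\ell]$ and this action commutes with the Galois action, the image $\operatorname{Im}\rho_{E,\ell}$ is forced into the normalizer of the centralizer of $\Z[\zeta_3]\otimes\F_\ell$ inside $\GL_2(\F_\ell)$, the extra (order-two) normalizer element being supplied by complex conjugation, which conjugates $\zeta_3$ to $\overline{\zeta}_3$ and therefore acts nontrivially on the Cartan (this is exactly the phenomenon recorded in \Cref{rem:CM-image}).

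The second step is the standard CM dichotomy together with the congruence bookkeeping. The reduction $\Z[\zeta_3]\otimes\F_\ell\cong\F_\ell[x]/(x^2+x+1)$ is a field precisely when $x^2+x+1$ is irreducible over $\F_\ell$, i.e.\ when $\ell$ is inert in $\Q(\sqrt{-3})$, and it splits as $\F_\ell\times\F_\ell$ when $\ell$ splits; thus the centralizing torus is a nonsplit (resp.\ split) Cartan subgroup. It then remains to translate the hypothesis into inertness: since $2\equiv 5\equiv 2\pmod 3$, both congruences force $\ell\equiv 2\pmod 3$, which is exactly the condition that $\left(\tfrac{-3}{\ell}\right)=-1$, so $\ell$ is inert and the Cartan is nonsplit. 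Combined with the first step this gives $\operatorname{Im}\rho_{E,\ell}\subseteq N(C)$ with $C$ a nonsplit Cartan, which is what is claimed and what feeds the nonsplit branch of \Cref{cor:NCart-bound}.

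The subtle point — and the reason the hypothesis is phrased modulo $9$ rather than modulo $3$ — is that containment in the normalizer of a nonsplit Cartan is only the coarse part of the cited theorem: the full result determines the \emph{exact} image, and the precise index of $\operatorname{Im}\rho_{E,\ell}$ inside the normalizer depends on finer $3$-adic data, namely the local behaviour at $3$ of the Hecke Grossencharacter attached to $E$ and the level-$9$ structure coming from $\Q(\zeta_9)\supset\Q(\sqrt{-3})$. The hard part, which I would defer entirely to \cite{lozano2018galois}, is this exact-image computation; the elementary reasoning above secures only the shape of the Cartan and the containment in its normalizer. For the present application this coarse containment is all that \Cref{cor:NCart-bound} requires, so the inertness computation together with the general CM dichotomy suffices, and no finer analysis is needed here.
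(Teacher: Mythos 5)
Your proposal is correct in substance, but note that the paper itself offers no proof of this proposition at all: it is stated verbatim as a citation of Theorem~1.4(ii) of Lozano-Robledo and treated as a black box. What you have done instead is reconstruct the elementary CM argument underlying the containment statement, and that reconstruction is sound: $j(E)=0$ forces $\mathrm{End}(\overline{E})\cong\Z[\zeta_3]$, the image of $\Gal(\overline{\Q}/\Q(\sqrt{-3}))$ lands in the centralizer of $\Z[\zeta_3]\otimes\F_\ell$ (a Cartan subgroup, nonsplit exactly when $\ell$ is inert in $\Q(\sqrt{-3})$), complex conjugation supplies the extra element of the normalizer, and since $2\equiv 5\equiv 2\pmod 3$ the hypothesis forces $\left(\tfrac{-3}{\ell}\right)=-1$, hence the nonsplit case. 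One small caution: the sentence ``this action commutes with the Galois action'' is literally true only for the subgroup $\Gal(\overline{\Q}/\Q(\sqrt{-3}))$, since a general $\sigma\in G_{\Q}$ satisfies $\sigma\circ\phi=\overline{\phi}\circ\sigma$ for $\phi\in\mathrm{End}(\overline{E})$; you correct this in the very next clause, so the argument is not damaged, but that first formulation on its own would only justify containment in the Cartan rather than in its normalizer. You are also right that the mod-$9$ hypothesis is irrelevant to the coarse containment (which holds for $\ell\equiv 8\pmod 9$ as well) and only enters Lozano-Robledo's determination of the exact image; since \Cref{cor:NCart-bound} and \Cref{prop:nonsplit} use only the containment, your more elementary argument in fact suffices for everything the paper extracts from this proposition.
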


\begin{proposition}\label{prop:nonsplit}
    If the image of the Galois representation \[\rho_{E, 5}: \Gal(\overline{K}/K) \rightarrow \SL_2(\F_5)\] is contained in the normalizer of a nonsplit Cartan, then $[L:K] \mid 12$.
    In particular, the symbol length of $\mathbin{_{\ell}\Br(E)}/\mathbin{_{\ell}\Br(K)}$ is bounded above by 6. 
\end{proposition}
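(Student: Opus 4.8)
The plan is to reduce the statement to a finite group computation in $\SL_2(\F_5)$ and then feed the resulting divisibility constraint into the symbol-length theorems already established.

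First I would determine the order of the relevant group. Let $C$ be the nonsplit Cartan subgroup; recall that $C \cong \F_{25}^\times$ has order $24$, and its normalizer $N(C)$ in $\GL_2(\F_5)$ has order $48$, since $C$ sits inside $N(C)$ with index $2$. By \Cref{imagegroup} the image $G = \mathrm{Im}\,\rho_{E,5}$ lies in $\SL_2(\F_5)$, and by hypothesis it lies in $N(C)$, so $G \le N(C) \cap \SL_2(\F_5)$. To compute the order of this intersection I would analyze the determinant restricted to $N(C)$: under the identification $C \cong \F_{25}^\times$ the determinant restricts to the field norm $\Norm_{\F_{25}/\F_5}\colon \F_{25}^\times \to \F_5^\times$, which is surjective, so already $\det(C) = \F_5^\times$ has order $4$ and hence $\det(N(C)) = \F_5^\times$. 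Therefore $N(C) \cap \SL_2(\F_5) = \ker\big(\det|_{N(C)}\big)$ has order $48/4 = 12$, whence $[L:K] = |G|$ divides $12$.

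For the final claim I would run over the divisors of $12$, noting first that $5 \nmid [L:K]$ in every case, so the hypothesis $\ell \nmid [L:K]$ of the main theorems holds throughout. If $[L:K] = 1$ then $K = L$ and $\varepsilon$ is by construction a tensor of two symbols, giving symbol length at most $2$; if $[L:K] = 2$, then \Cref{cor:naive-bound} gives a bound of $2[L:K] - 2 = 2$. If $[L:K] \in \{3,4,6\}$, then $[L:K] > 2$ and \Cref{thm:half_bound} gives the bound $[L:K] - 1 \le 5$. The extremal case is $[L:K] = 12$: here $|G| = 12$, so by Cauchy's theorem $G$ contains a subgroup of order $d = 2$, and \Cref{thm:subfield-bound} yields the bound $(1 - \tfrac{1}{2})[L:K] = 6$. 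Combining the cases, the symbol length is at most $6$.

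The substantive step is the order computation $|N(C) \cap \SL_2(\F_5)| = 12$; everything afterward is bookkeeping over the divisors of $12$. The point requiring the most care is the assertion that the determinant is already surjective on the Cartan $C$ — equivalently, that $\Norm_{\F_{25}/\F_5}$ is onto $\F_5^\times$ — since this forces the index of the intersection in $N(C)$ to be $4$ rather than $2$, and thereby produces the sharp divisor $12$ rather than a smaller one. It is precisely the extremal value $[L:K] = 12$ that makes \Cref{thm:subfield-bound} with $d = 2$ saturate the stated bound of $6$.
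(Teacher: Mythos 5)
Your proof is correct, and its skeleton matches the paper's: bound the order of $G$ using the structure of the normalizer $N(C)$ of a nonsplit Cartan, then convert $[L:K]\mid 12$ into the symbol-length bound via the theorems already proved. You diverge in two places, both to your advantage in clarity. For the order computation, the paper argues that $|N(C)|=48$ forces $[L:\Q]\mid 48$ and then divides by $[K:\Q]$, which is a multiple of $4$ because $\zeta_5\in K$; you instead intersect $N(C)$ with $\SL_2(\F_5)$ directly, using surjectivity of the norm $\Norm_{\F_{25}/\F_5}\colon\F_{25}^\times\to\F_5^\times$ to see that $\det$ already maps $C$ onto $\F_5^\times$, so $|N(C)\cap\SL_2(\F_5)|=48/4=12$. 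Your version is entirely internal to the representation over $K$, whereas the paper's detour through $[L:\Q]$ tacitly treats the situation over $\Q$; ultimately both arguments exploit the same fact, namely that $\zeta_5\in K$ constrains the determinant (\Cref{imagegroup}), viewed from two sides. For the second step the paper simply cites \Cref{cor:NCart-bound}, whose own proof is essentially the case split you perform: proper subgroups of the order-$12$ group are handled by \Cref{thm:half_bound}, and the full group contains an element of order $2$, so \Cref{thm:subfield-bound} with $d=2$ gives $6$. Your explicit enumeration of the divisors of $12$, with \Cref{cor:naive-bound} covering $[L:K]\le 2$, is a correct unpacking of that citation and loses nothing; it also makes visible that $[L:K]=12$ is the only case where the bound $6$ is actually attained.
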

\begin{proof}
The order of a nonsplit Cartan subgroup is equal to $\ell^2 - 1 = 24$ since it is isomorphic to $\F_{\ell^2}^\times$. By \cite{lang2012introduction}, we know that a Cartan subgroup has index at most 2 in its normalizer, so the image of $\rho_{E,5}$ is contained inside a group of order at most 48. Therefore $[L:\Q] \mid 48$. However, since $K$ contains $\zeta_5$ we see that $[K:\Q]$ is a multiple of 4, hence $[L:K] \mid 12$. The bound on the symbol length follows from \Cref{cor:NCart-bound}.
\end{proof}
Since an elliptic curve $E$ in the infinite family of CM elliptic curves $\mathcal{F}$ satisfies $j(E) = 0$, \Cref{cor:NCart-bound}, \Cref{prop:alvaro}, and \Cref{prop:nonsplit} immediately imply that the symbol length of $\mathbin{_{5}\Br(E)}/\mathbin{_{5}\Br(K)}$ is bounded above by 6.
In the following example, we use our code to compute the symbols in $\mathbin{_{5}\Br(E)} / \mathbin{_{5}\Br(K)}$ for a specific CM elliptic curve $E$ in the infinite family $\mathcal{F}$ and illustrate \Cref{thm:subfield-bound}.

\begin{ex}
Let $\z_{5}$ be a primitive 5th root of unity, and consider the number field $K = \Q(\z_5)$. Consider the CM elliptic curve over $K$ given by the affine equation $$E: y^2 = x^3 + 10.$$ 
\end{ex}

All computations for this example were done in SageMath \cite{sagemath} and can be found at 
\url{https://github.com/team-brauer/brauer-groups}.
The division field $L=K(E[5])$ is an extension of degree $4$ defined by adjoining a root $\mu$ of the polynomial $z^4 + \left(6\zeta_5^3 - 6\zeta_5^2 - 6\zeta_5 + 6 \right)z^2 + 24\zeta_5^3 - 12\zeta_5 - 12$.
We have an element $\sigma$ of order $4$ which sends \[\mu \to \left(\tfrac12\zeta_5^2 + \tfrac12\zeta_5 + \tfrac12\right)\mu^3 + \left (-\zeta_5^3 - \zeta_5^2 + 4 \right)\mu.\]
Using SageMath, we choose a basis $P, Q$ of $E[5]$ as in \Cref{prop:asa} given by {\small \begin{align*} P &= \left (-\zeta_5^3\mu^2 - 6\zeta_5^3 - 6\zeta_5 + 2, \left(-\zeta_5^3 + \tfrac{1}{2}\zeta_5^2 - \tfrac{1}{2}\zeta_5 + 1\right)\mu^3 + (3\zeta_5^3 + 6\zeta_5^2 - 9\zeta_5 + 15)\mu \right),\\ Q &= \left (\zeta_5^3\mu^2 + 6\zeta_5^3 + 6\zeta_5^2 + 6\zeta_5 + 2 , \left(\tfrac{1}{2}\zeta_5^2 - \tfrac{1}{2}\zeta_5 \right)\mu^3 + \left (3\zeta_5^3 + 6\zeta_5^2 + 3\zeta_5 + 3 \right)\mu \right).\end{align*}}   We observe that $\sigma(P) = Q$. Choose $(\a, \b) \in M_{G, P, Q}$ to be the image of $(\zeta_5+\mu, \mu)$ after applying $\mathcal{N}_{P,Q}$. Specifically, 
{\small \begin{align*}
    \a &= (-16188/319831\zeta_5^3 + 15910/319831\zeta_5^2 - 4980/319831\zeta_5 + 14076/319831)\mu^3\\& + (32098/319831\zeta_5^3 + 11208/319831\zeta_5^2 + 30264/319831\zeta_5 + 16188/319831)\mu^2 \\&+ (-70930/319831\zeta_5^3 + 67262/319831\zeta_5^2 - 199162/319831\zeta_5 + 273158/319831)\mu \\&+ 138192/319831\zeta_5^3 - 128232/319831\zeta_5^2 + 344088/319831\zeta_5 + 390761/319831.
\end{align*}}
Observe that $\sigma(\a)^{-1} = \b$. Now by \Cref{thm:subfield-bound}, it suffices to compute the corestriction of \[(\a, t_{P}/t_{\sigma^2(P)})_{5, L(E)}.\]
By \Cref{prop:tPmult}, this is in fact equal to the corestriction of \[(\a, t_{2P})_{5, L(E)},\] since $\sigma^2(S) = 2S$ for any $S \in E[\ell]$. We calculate this corestriction by first passing through the intermediate field $K' = K(\zeta_3) = \Q(\zeta_{15})$. 
The corestriction of the symbol $(\a, t_{2P})_{5, L(E)}$ to $\Q(\zeta_{15})$ as calculated by \Cref{rosset-tate} gives us two symbols but the first is trivial as $\alpha$ has norm $1$ by \Cref{lem:im_norm_1}. The second symbol is given by 
{\tiny \begin{align*}
    (((&551984402727902346319004580861749039896316402068750345026\cdot \zeta_{15} ^7+ \\
    &134424070611518537543552069107213590707472267793980007392\cdot \zeta_{15} ^6- \cdots )\cdot x^2 y^{10}+ \cdots)\\
    /((-&134396143139335966898002857906327476642111742609439665612\z_{15}^7 -\\ 
    &358389715038229245061340954416873271045631313625172441632\z_{15}^6 +\cdots)\cdot x^2y^{10}+\cdots), \\
    ((&86565057808308862398284296950\z_{15}^7 -15301839790878217253755018864\z_{15}^6-\cdots)\cdot x^2y^4+\cdots)\\
    /((&36812512953998458768246189661\z_{15}^7 -23426144607089928307065757057\z_{15}^5 + \cdots)\cdot x^2y^4+\cdots)).
\end{align*}}
The symbol in its entirety is approximately 53000 characters long and can be viewed at
\url{https://github.com/team-brauer/brauer-groups}.
\bibliographystyle{amsplain}

\begin{thebibliography}{99}

\bibitem[ABGV11]{auel2011open}
Asher Auel, Eric Brussel, Skip Garibaldi, and Uzi Vishne.
\newblock Open problems on central simple algebras.
\newblock {\em Transform. Groups}, 16(1):219--264, 2011.

\bibitem[AH32]{albert1932determination}
A.~Adrian Albert and Helmut Hasse.
\newblock A determination of all normal division algebras over an algebraic
  number field.
\newblock {\em Trans. Amer. Math. Soc.}, 34(3):722--726, 1932.

\bibitem[AM72]{artin1972some}
M.~Artin and D.~Mumford.
\newblock Some elementary examples of unirational varieties which are not
  rational.
\newblock {\em Proc. London Math. Soc. (3)}, 25:75--95, 1972.

\bibitem[Art82]{artin917brauer}
M.~Artin.
\newblock Brauer-{S}everi varieties.
\newblock In {\em Brauer groups in ring theory and algebraic geometry
  ({W}ilrijk, 1981)}, volume 917 of {\em Lecture Notes in Math.}, pages
  194--210. Springer, Berlin-New York, 1982.

\bibitem[BMT16]{brussel2016cyclic}
Eric Brussel, Kelly McKinnie, and Eduardo Tengan.
\newblock Cyclic length in the tame {B}rauer group of the function field of a
  {$p$}-adic curve.
\newblock {\em Amer. J. Math.}, 138(2):251--286, 2016.

\bibitem[BNH32]{brauer1932beweis}
R.~Brauer, E.~Noether, and H.~Hasse.
\newblock Beweis eines {H}auptsatzes in der {T}heorie der {A}lgebren.
\newblock {\em J. Reine Angew. Math.}, 167:399--404, 1932.

\bibitem[CG01]{chern-gul-2tor}
V.~Chernousov and V.~Guletski\u{i}.
\newblock 2-torsion of the {B}rauer group of an elliptic curve: generators and
  relations.
\newblock In {\em Proceedings of the {C}onference on {Q}uadratic {F}orms and
  {R}elated {T}opics ({B}aton {R}ouge, {LA}, 2001)}, number Extra Vol., pages
  85--120, 2001.

\bibitem[Dra83]{draxl-skew}
P.~K. Draxl.
\newblock {\em Skew fields}, volume~81 of {\em London Mathematical Society
  Lecture Note Series}.
\newblock Cambridge University Press, Cambridge, 1983.

\bibitem[Hil97]{hilbert}
D~Hilbert.
\newblock Zahlbericht.
\newblock {\em Jahresbericht der Deutschen Mathematiker-Vereinigung},
  4:1894--95, 1897.

\bibitem[Lan95]{lang2012introduction}
Serge Lang.
\newblock {\em Introduction to modular forms}, volume 222 of {\em Grundlehren
  der Mathematischen Wissenschaften [Fundamental Principles of Mathematical
  Sciences]}.
\newblock Springer-Verlag, Berlin, 1995.
\newblock With appendixes by D. Zagier and Walter Feit, Corrected reprint of
  the 1976 original.

\bibitem[Len76]{lenstra-k2}
H.~W. Lenstra, Jr.
\newblock {$K_{2}$} of a global field consists of symbols.
\newblock In {\em Algebraic {$K$}-theory ({P}roc. {C}onf., {N}orthwestern
  {U}niv., {E}vanston, {I}ll., 1976)}, pages 69--73. Lecture Notes in Math.,
  Vol. 551, 1976.

\bibitem[Lic69]{Lichtenbaum-duality}
Stephen Lichtenbaum.
\newblock Duality theorems for curves over {$p$}-adic fields.
\newblock {\em Invent. Math.}, 7:120--136, 1969.

\bibitem[{LMF}21]{lmfdb}
The {LMFDB Collaboration}.
\newblock The {L}-functions and modular forms database.
\newblock \url{http://www.lmfdb.org}, 2021.
\newblock [Online; accessed 12 July 2021].

\bibitem[LR19]{lozano2018galois}
Álvaro Lozano-Robledo.
\newblock Galois representations attached to elliptic curves with complex
  multiplication, 2019.

\bibitem[Man71]{manin1971groupe}
Y.~I. Manin.
\newblock Le groupe de {B}rauer-{G}rothendieck en g\'{e}om\'{e}trie
  diophantienne.
\newblock In {\em Actes du {C}ongr\`es {I}nternational des {M}ath\'{e}maticiens
  ({N}ice, 1970), {T}ome 1}, pages 401--411. 1971.

\bibitem[Mat16]{Matzri}
Eliyahu Matzri.
\newblock Symbol length in the {B}rauer group of a field.
\newblock {\em Trans. Amer. Math. Soc.}, 368(1):413--427, 2016.

\bibitem[MS82]{Mer-Sus}
A.~S. Merkurjev and A.~A. Suslin.
\newblock {$K$}-cohomology of {S}everi-{B}rauer varieties and the norm residue
  homomorphism.
\newblock {\em Izv. Akad. Nauk SSSR Ser. Mat.}, 46(5):1011--1046, 1135--1136,
  1982.

\bibitem[NSW08]{neukirch2013cohomology}
J\"{u}rgen Neukirch, Alexander Schmidt, and Kay Wingberg.
\newblock {\em Cohomology of number fields}, volume 323 of {\em Grundlehren der
  Mathematischen Wissenschaften [Fundamental Principles of Mathematical
  Sciences]}.
\newblock Springer-Verlag, Berlin, second edition, 2008.

\bibitem[RT83]{Rosset-Tate-1983}
Shmuel Rosset and John Tate.
\newblock A reciprocity law for {$K_{2}$}-traces.
\newblock {\em Comment. Math. Helv.}, 58(1):38--47, 1983.

\bibitem[{Sag}21]{sagemath}
{Sage Developers}.
\newblock {\em {S}ageMath, the {S}age {M}athematics {S}oftware {S}ystem
  ({V}ersion 9.3)}, 2021.

\bibitem[Sal07]{saltman2007}
David~J. Saltman.
\newblock Cyclic algebras over {$p$}-adic curves.
\newblock {\em J. Algebra}, 314(2):817--843, 2007.

\bibitem[Ser79]{serre1979}
Jean-Pierre Serre.
\newblock {\em Local fields}, volume~67 of {\em Graduate Texts in Mathematics}.
\newblock Springer-Verlag, New York-Berlin, 1979.
\newblock Translated from the French by Marvin Jay Greenberg.

\bibitem[Sil09]{silverman2009arithmetic}
Joseph~H. Silverman.
\newblock {\em The arithmetic of elliptic curves}, volume 106 of {\em Graduate
  Texts in Mathematics}.
\newblock Springer, Dordrecht, second edition, 2009.

\bibitem[Sko99]{sko-cup-gens}
Alexei~N. Skorobogatov.
\newblock Beyond the {M}anin obstruction.
\newblock {\em Invent. Math.}, 135(2):399--424, 1999.

\bibitem[Sko01]{sko-torsors}
Alexei Skorobogatov.
\newblock {\em Torsors and rational points}, volume 144 of {\em Cambridge
  Tracts in Mathematics}.
\newblock Cambridge University Press, Cambridge, 2001.

\bibitem[Sur10]{suresh2010}
Venapally Suresh.
\newblock Bounding the symbol length in the {G}alois cohomology of function
  fields of {$p$}-adic curves.
\newblock {\em Comment. Math. Helv.}, 85(2):337--346, 2010.

\bibitem[Ure19]{ure2019prime}
Charlotte Ure.
\newblock {P}rime {T}orsion in the {B}rauer group of an {E}lliptic curve, 2019.

\end{thebibliography}

\end{document}